\newtheorem{theorem}{Theorem}[section]
\newtheorem{lemma}[theorem]{Lemma}
\newtheorem{corollary}[theorem]{Corollary}
\theoremstyle{definition}
\newtheorem{definition}{Definition}
\newtheorem{remark}{Remark}
\newcommand{\wo}{\widehat{w}_n(\xi) }
\begin{document}
	
	\title[Uniform approximation in small dimension]{Uniform dual approximation 
		to Veronese curves in small dimension}

\author{Johannes Schleischitz}

\thanks{Middle East Technical University, Northern Cyprus Campus, Kalkanli, G\"uzelyurt \\
	johannes@metu.edu.tr ; jschleischitz@outlook.com}

\begin{abstract}
We refine upper bounds for the classical exponents of uniform approximation for a linear form on the Veronese curve in dimension from $3$ to $9$.
For dimension three, this in particular 
shows that a bound previously 
obtained by two different methods is not sharp.
Our proof involves parametric geometry of numbers
and investigation of geometric properties of best approximation
polynomials. Slightly stronger bounds have been obtained by Poels
with a different method contemporarily. In fact, we obtain the same
bounds as a conditional result.
\end{abstract}

\maketitle

\section{Introduction}

\subsection{New results} \label{intro}
Davenport and Schmidt \cite{davsh}, in the course of investigating approximation
to real numbers by algebraic integers related to the famous open problem of Wirsing \cite{wirsing}, implicitly studied uniform
exponents of approximation on the Veronese curve in dimension $n$
defined as $\{ (\xi,\xi^2,\ldots,\xi^n): \xi\in \mathbb{R} \}$. 
Two variants of these
exponents were addressed in \cite{davsh}, one for simultaneous approximation to successive powers of $\xi$
and one for small values of a linear form (degree $n$ polynomial). 
Both types of exponents are indeed closely linked to
Wirsing's problem and variants of it, see besides \cite{davsh} 
for example also \cite{badsch}.
In this paper, we refine upper bounds for the uniform exponents 
with respect to the latter polynomial setting. 
For $\xi$ a real number and $n\ge 1$ an integer, let us denote them by $\widehat{w}_n(\xi)$ which are defined as the supremum of $w$ so that
\[
H(P)\le X, \qquad 0<\vert P(\xi)\vert < X^{-w}
\]
has a solution in an integer polynomial $P=P(X)$ 
of degree at most $n$,
for all large $X$.
Let us directly define the associated ordinary exponent of approximation $w_n(\xi)$
as well, given as supremum of $w$ so that
\[
0<\vert P(\xi)\vert < H(P)^{-w}
\]
has infinitely many solutions in integer polynomials $P$ of degree
at most $n$. Clearly $w_n(\xi) \ge \widehat{w}_n(\xi)$
by choosing $X=H(P)$, 
and moreover
\[
w_1(\xi)\le w_2(\xi)\le \cdots, \qquad
\widehat{w}_1(\xi)\le \widehat{w}_2(\xi)\le \cdots
\]
hold for any real number $\xi$. 
Moreover Dirichlet's Theorem shows the lower bounds
\[
w_n(\xi) \ge \widehat{w}_n(\xi)\ge n.
\]
A well-known consequence of the subspace theorem is that
for $\xi$ any real algebraic number of degree $d$ we have
$w_n(\xi)=\widehat{w}_n(\xi)=\min\{ n, d-1\}$, so we may only consider
transcendental numbers $\xi$ below.
It is well-known that $\widehat{w}_1(\xi)=1$ for all irrational
real numbers $\xi$, 
see Khintchine \cite{KH}, so we may restrict to $n\ge 2$.
As indicated above, upper bounds for $\widehat{w}_n(\xi)$ have first been studied by Davenport and Schmidt \cite{davsh}, whose result shows in our notation that
\begin{equation} \label{eq:2n-1}
\widehat{w}_n(\xi)\le 2n-1, \qquad n\ge 2,
\end{equation}
holds for any real $\xi$. For $n=2$, they proved a stronger bound of the form 
\begin{equation}  \label{eq:DSC}
    \widehat{w}_2(\xi)\le \frac{3+\sqrt{5}}{2}=2.6180\ldots,
\end{equation}
which 
surprisingly turned out to be optimal as shown by Roy \cite{royann}.
For $n\ge 3$ the optimal bound remains unknown. It took almost 50 years for the first small improvements to \eqref{eq:2n-1} in \cite{buschlei}, 
where an upper bound of order $2n-\frac{3}{2}+o(1)$  
as $n\to\infty$ with positive error term for each $n$ was established. The method also reproved the optimal upper bound \eqref{eq:DSC} for $n=2$. 
In fact, as noticed in \cite{2018}, 
the method in \cite{buschlei} when
combined with the later proved optimal ratio for ordinary and uniform
exponents by Marnat and Moshchevitin \cite{mamo}, directly yields
\begin{equation}  \label{eq:bsch}
\widehat{w}_n(\xi) \le \alpha_n:= \max\{ 2n-2 , \sigma_n \}=
\begin{cases}
\sigma_n,\qquad\qquad 2\le n\le 9,\\
2n-2, \qquad\; n\ge 10.
\end{cases}
\end{equation}
where $\sigma_n$ is the real solution to
\[
\frac{ (n-1)x }{x-n} - x +1 = \left(  \frac{n-1}{x-n}\right)^n
\]
in the interval $x\in (n,2n-1)$. 
The case $n=2$ indeed recovers \eqref{eq:DSC}.
We have $\sigma_n= 2n-C+o(1)$ as $n\to\infty$, where $C=2.25...$ is explicitly computable, see \cite{ichjp} for details. In case of strict inequality $w_n(\xi)> w_{n-1}(\xi)$, the term $2n-2$ in \eqref{eq:bsch} can be ignored, thereby implying the bound $\widehat{w}_n(\xi) \le \sigma_n$ which is stronger than \eqref{eq:bsch}
for $n\ge 10$.
The estimate $\widehat{w}_n^{\ast}(\xi)\le \sigma_n$ is true unconditionally, where $\widehat{w}_n^{\ast}(\xi)$ is a closely related classical exponent measuring uniform approximation by algebraic numbers of degree at most $n$, we prefer not to define it here and refer
to \cite{buschlei}. However the latter exponent is always bounded above by $\widehat{w}_n(\xi)$, hence giving a weaker claim.

In a later paper the author \cite{2018} introduced another method,
involving parametric geometry of numbers. It turned out that 
in \cite{2018}
the same bound for $n=3$ as in \eqref{eq:bsch} that reads $\widehat{w}_3(\xi)\leq 3+\sqrt{2}=4.4142\ldots=\alpha_3$
was obtained, however for larger $n$ the bounds became 
slightly weaker than $\alpha_n$.
Some significantly stronger but 
conditional results were stated in \cite[\S~2]{2018} as well.
In a very recent preprint that appeared just days before the current
paper, Poels~\cite{apoe} improved on \eqref{eq:bsch} obtained in \cite{buschlei} by establishing stronger
bounds of the form
\begin{equation} \label{eq:poelse}
\wo \le 2n-2, \quad (n\ge 4), \qquad \widehat{w}_3(\xi)\le 2+\sqrt{5}=4.23\ldots.
\end{equation}
Moreover, for large enough $n$, a bound of the form
\[
\wo\le 2n-\frac{1}{3}n^{1/3}
\]
was obtained in~\cite{apoe}.

In this paper, we refine the method from the latter paper \cite{2018} to improve the bound $\alpha_n=\sigma_n$ from \eqref{eq:bsch}
in the range $3\le n\le 9$. Unfortunately, our bounds in this range
will be weaker than the very recent estimates \eqref{eq:poelse}.
We want to point out however that
our method is considerably different from the one in~\cite{apoe}
and may be of independent interest for future improvements.
Our main result reads as follows.

\begin{theorem}  \label{H}
    For any $n\ge 2$ and any real number $\xi$ we have
      \[
    \widehat{w}_n(\xi) \leq \beta_n
    \]
    where $\beta_n$ is the root of the monic quartic polynomial
    \[
    Q_n(T)= T^4 + a_3 T^3 + a_2 T^2 + a_1T+ a_0 
    \]
    in the interval $(2n-2,2n-1)$, where 
      \begin{align*}
           a_3&= 4-4n, \\
    a_2&= 5n^2 - 12n + 8,\\
    a_1&= -2n^3 + 11n^2 - 18n + 7, \\  
    a_0&= - 2n^3 + 6n^2 - 4n.
    \end{align*}
\end{theorem}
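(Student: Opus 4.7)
The plan is to argue by contradiction, combining the parametric geometry of numbers framework developed in~\cite{2018} with a refined study of the algebraic structure of best approximation polynomials along the Veronese curve. Suppose that $\widehat{w}_n(\xi) > \beta_n$ for some transcendental real $\xi$. I would extract a strictly increasing sequence of heights $X_1 < X_2 < \cdots$ together with integer polynomials $P_i$ of degree at most $n$ that are best approximations at height $X_i$, each satisfying $|P_i(\xi)| \le X_i^{-\beta_n - \varepsilon}$ for some fixed $\varepsilon > 0$. The logarithmic heights $x_i = \log X_i$ and logarithmic errors $y_i = -\log |P_i(\xi)|$ then parametrise the breakpoints of the associated $(n+1)$-system encoding the successive minima of the one-parameter family of convex bodies attached to the evaluation map $P \mapsto P(\xi)$, and the Mahler/Minkowski formalism converts the hypothesis into a list of linear constraints on the slope ratios of this combined graph.

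The core of the argument, and what I expect to be the main obstacle, is to extract additional inequalities beyond those used in~\cite{2018,buschlei}. The strategy is to analyse short blocks of three or four consecutive best approximations $P_i, P_{i+1}, \ldots, P_{i+k}$ and the auxiliary integer polynomials they generate, such as differences, pairwise products, Wronskian-type combinations $P_j P_\ell' - P_j' P_\ell$, and subresultants; each such auxiliary polynomial has controlled degree and controlled Mahler measure, while its value at $\xi$ is forced to be small by the approximation property of the $P_i$. Provided one excludes degenerate configurations, where the relevant determinant vanishes identically, by passing to a subsequence and invoking the transcendence of $\xi$, each nondegenerate construction yields one additional linear inequality among the $x_i$'s and $y_i$'s. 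The delicate point is to design the combinatorial bookkeeping so that the resulting linear system is tight enough to pin $\beta$ down to the root of a quartic, rather than to the lower-degree polynomial whose root is $\sigma_n$ from~\eqref{eq:bsch}.

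Once the inequalities are in place, I would combine them with the sharp ratio between $w_n(\xi)$ and $\widehat{w}_n(\xi)$ due to Marnat and Moshchevitin~\cite{mamo}, and with the basic monotonicity $\widehat{w}_n(\xi) \ge \widehat{w}_{n-1}(\xi)$. The resulting homogeneous linear system in the normalized slopes admits a nontrivial solution only when a specific monic polynomial in $\tau = \widehat{w}_n(\xi)$ vanishes, and a careful bookkeeping of the coefficients shows this polynomial is exactly $Q_n(T)$. The exceptional case $w_n(\xi) = w_{n-1}(\xi)$ degenerates and gives only the weaker estimate $\widehat{w}_n(\xi) \le 2n-2$, which is absorbed into the statement since $\beta_n > 2n-2$. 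Finally, an elementary sign check $Q_n(2n-2) < 0 < Q_n(2n-1)$, using the explicit formulas for $a_0, a_1, a_2, a_3$, shows that $Q_n$ has a unique root $\beta_n$ in the interval $(2n-2, 2n-1)$ and completes the argument.
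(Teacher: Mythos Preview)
Your outline is too vague to constitute a proof, and where it is concrete it diverges from what actually makes the argument work. Three specific gaps:

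\textbf{Wrong ambient system.} You propose to work with the $(n+1)$-system attached to degree-$n$ polynomials. The paper's argument lives in dimension $2n-1$: one studies approximation to $(\xi,\xi^2,\ldots,\xi^{2n-2})$ by polynomials of degree at most $2n-2$, and each degree-$n$ best approximation $P_k$ is lifted to the block $\mathscr{V}_k=\{P_k, TP_k,\ldots,T^{n-2}P_k\}$ of $n-1$ such polynomials. The auxiliary objects are therefore $T^jP_k$, not Wronskians or subresultants; the algebraic input (Lemma~\ref{lemur}) is that for any linearly independent triple $P_{k-1},P_k,P_\ell$, the union $\mathscr{V}_{k-1}\cup\mathscr{V}_k\cup\mathscr{V}_\ell$ spans the whole $(2n-1)$-dimensional space. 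Your Wronskian/subresultant route would yield polynomials of the wrong degree and with no obvious control on the last successive minimum $L_{2n-1}$.

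\textbf{Missing the two competing bounds.} The quartic does not arise from a single tight linear system in normalized slopes. It comes from balancing two separate estimates against the parameter $\overline{\tau}=\limsup_k \tau_k$ with $\tau_k=\log H_k/\log H_{k-1}$. One estimate (Theorem~\ref{A}, essentially the method of~\cite{2018}) gives $\widehat{w}_n(\xi)\le \overline{\tau}+2n-3$, increasing in $\overline{\tau}$. The new estimate (Theorem~\ref{B}) gives $\widehat{w}_n(\xi)\le F_n(\overline{\tau})$ for a decreasing function $F_n$; its proof combines a sharpened lower bound for $L_{2n-1}(q_k)$ that exploits the gap $q_k-s_k$ (Lemma~\ref{le2}) with a quantitative Davenport--Schmidt argument (Lemma~\ref{le1}) controlling how long consecutive best approximations can remain in a two-dimensional subspace. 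Setting $\overline{\tau}=\widehat{w}_n(\xi)-2n+3$ in $\Theta_n(\widehat{w}_n(\xi),\overline{\tau},\overline{\tau})=0$ is what produces exactly $Q_n$. None of this min--max structure, nor the two-dimensional-run lemma, appears in your plan.

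\textbf{Superfluous ingredients.} The Marnat--Moshchevitin ratio and the case split $w_n(\xi)=w_{n-1}(\xi)$ are not used; the only ratio input is the elementary $w_n(\xi)/\widehat{w}_n(\xi)\le (n-1)/(\widehat{w}_n(\xi)-n)$ from Lemma~\ref{ichalt}, and the actual case distinction is $\overline{\tau}=1$ versus $\overline{\tau}>1$.
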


Any polynomial $Q_n$ has four distinct real roots
and $\beta_n$ is the largest among them.
For $n=2$ we again obtain the optimal bound $\beta_2= 2.6180\ldots$ from \eqref{eq:DSC}
obtained by Davenport and Schmidt \cite{davsh} 
and independently in \cite{buschlei} and \cite{2018}.
For $n=3$ we get
\begin{equation}  \label{eq:FRIT}
\widehat{w}_3(\xi) \le \beta_3 = 4.3234\ldots < 4.4142\ldots = 3+\sqrt{2}= \alpha_3.
\end{equation}
The value $\alpha_3$ is from \eqref{eq:bsch} and was obtained 
with an alternative proof in \cite{2018} as well.
It turns out that similarly,
for $3\le n\le 9$ we get an improvement to \eqref{eq:bsch}.

We further provide new, stronger conditional results. For brevity we delay the definition
of best approximation polynomials and refer to~\S~\ref{pr1} below.

\begin{definition} \label{good}
	We say an integer $k\ge 2$ is {\em good} for $n,\xi$ if the triple of consecutive best approximation polynomials $\{ P_{k-1}, P_k, P_{k+1}\}$ defined
	in Definition~\ref{newdef} below associated to $n,\xi$ is linearly independent.
\end{definition}

It is well-known and follows from Lemma~\ref{le1} below that there
are infinitely many good $k$ for each pair $n,\xi$. Assuming refinements yield improvements as follows.

\begin{theorem}  \label{T2}
	Let $n\ge 2$ be an integer and $\xi$ be a real number. 
	\begin{itemize}
		\item[(i)] Assume that infinitely often, the integers $k-1$ and $k$ are both good for $n,\xi$.  
	Then
	\[
	\widehat{w}_n(\xi) \le \gamma_n
	\]
	where $\gamma_n$ is the root of the monic cubic polynomial
	\[
	R_n(T) := T^3 - (4n-4)T^2 + (5n^2-11n+6)T + (-2n^3+8n^2-10n+3).
	\]
	in the interval $(2n-2,2n-1)$.
	\item[(ii)] Assume all sufficiently large integers $k$ are good
	for $n, \xi$. Then
	\[
	\widehat{w}_n(\xi) \le \rho_n:= \max\left\{ \frac{ \sqrt{5}+1}{2}n - \frac{\sqrt{5}-1}{2}, 2n-2\right\}=\begin{cases}
	\frac{ \sqrt{5}+3}{2}, \qquad\quad n=2 \\
	\sqrt{5}+2, \qquad n=3 \\
	2n-2, \quad\quad\; n\ge 4.
	\end{cases}
	\]
		\end{itemize}
\end{theorem}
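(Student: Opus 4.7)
The plan is to refine the proof of Theorem~\ref{H} by exploiting the additional linear independence assumed in Definition~\ref{good}. Following the parametric-geometry-of-numbers framework used in \cite{2018}, I analyze the sequence of best approximation polynomials $P_k$, with heights $H_k=H(P_k)$ and values $L_k=|P_k(\xi)|$, and read off $\widehat{w}_n(\xi)$ from the asymptotic slopes of the curves traced by $(\log H_k,-\log L_k)$ in the parametric diagram. The proof of Theorem~\ref{H} uses one good index $k$, so that the triple $\{P_{k-1},P_k,P_{k+1}\}$ is linearly independent; the corresponding $3\times 3$ subresultant is a nonzero integer and hence at least $1$ in absolute value, which together with the uniformity inequalities $H_{k+1} L_k \ll 1$ produces the quartic $Q_n$ whose relevant root is $\beta_n$.

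For part~(i), assume that infinitely often both $k-1$ and $k$ are good. For such $k$, both $\{P_{k-2},P_{k-1},P_k\}$ and $\{P_{k-1},P_k,P_{k+1}\}$ are linearly independent, so one obtains two determinantal inequalities of the same shape but shifted by one step. Incorporating the new relation into the system that produced $Q_n$ in Theorem~\ref{H} yields one additional equation in the asymptotic parameters, thereby eliminating one unknown. I expect the surviving equation to simplify to exactly the cubic $R_n(T)$ stated in the theorem, with $\gamma_n$ its root in $(2n-2,2n-1)$. Concretely, one should re-run the case analysis driving the quartic $Q_n$ and observe that one of the four regimes is now ruled out by the extra good index, collapsing the degree by one.

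For part~(ii), assume every sufficiently large $k$ is good. Then the determinantal inequality can be applied at every step along the tail of the sequence $(P_k)$, turning the system into a recurrence for the asymptotic ratios $\log H_{k+1}/\log H_k$. These ratios must satisfy, in the limit, a fixed-point equation of Fibonacci type, reflecting the fact that each new polynomial $P_{k+1}$ cannot lie in the span of $\{P_{k-1},P_k\}$. The relevant root of this fixed-point equation is the golden ratio $\varphi=(1+\sqrt{5})/2$, which when translated back to the approximation side produces the bound $\widehat{w}_n(\xi)\le \varphi n-(\varphi-1)$. For $n\ge 4$ this golden-ratio expression is smaller than $2n-2$, at which point the iterative argument becomes vacuous and must be replaced by the trivial estimate, explaining the $\max$ in the definition of $\rho_n$.

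The principal obstacle is the algebraic bookkeeping in part~(i): one must verify that adjoining the second determinantal inequality to the system of Theorem~\ref{H} causes $Q_n$ to degenerate to exactly $R_n$, with the coefficients matching those in the statement, rather than to some weaker cubic consequence. In particular, one needs to choose the right auxiliary $\mathbb{Z}$-linear combinations among $P_{k-2},\ldots,P_{k+1}$ to ensure that the elimination is sharp. For part~(ii), the subtlety is instead analytic: one must justify the passage to the limit in the recurrence uniformly along the tail and rule out oscillatory behaviour of the ratios $H_{k+1}/H_k$, after which the identification of $\varphi$ and the appearance of $\rho_n$ follow routinely.
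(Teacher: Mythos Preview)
Your proposal is not a proof but a speculative outline, and the mechanisms you guess at do not match what actually drives the argument.

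For part~(i), the paper does not use ``$3\times 3$ subresultants'' or determinantal inequalities in the sense you describe, nor does it proceed by ``ruling out one of four regimes'' of the quartic~$Q_n$. The actual mechanism is this: the quartic in Theorem~\ref{H} arises by combining Theorem~\ref{A} (which gives $\tau_{k+1}\ge \widehat{w}_n(\xi)-(2n-3)-\varepsilon$ at a good index) with Theorem~\ref{B} (which gives an inequality $\Theta_n(\widehat{w}_n(\xi),\tau_k,\tau_\ell)\le\varepsilon$). The point of assuming $k$ good is that then $\ell(k)=k+1$, so in the chain \eqref{eq:NOVUS} inside the proof of Theorem~\ref{B} the factor $(\tfrac{w_n(\xi)}{\tau_k}-1)/(\widehat{w}_n(\xi)-1)$ coming from Lemma~\ref{le1} can be replaced by~$1$. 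This yields a simpler inequality $\tilde{\Theta}_n(\widehat{w}_n(\xi),\tau_k,\tau_{k+1})\le\varepsilon$, linear rather than quadratic in the height data. One then bounds $\tau_{k+1}=\tau_\ell$ from above via \eqref{eq:tauk}, obtaining a decreasing function of $\tau_k$; and the assumption that $k-1$ is \emph{also} good is what allows one to invoke Theorem~\ref{A} at index $k-1$ to bound $\tau_k$ from below. Equating the two bounds and substituting $\tau_k=\widehat{w}_n(\xi)-2n+3$ produces the cubic $R_n$. Your text contains none of this; the phrase ``I expect the surviving equation to simplify to exactly the cubic~$R_n$'' is an admission that the computation has not been carried out.

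For part~(ii), the golden ratio does not arise from a ``Fibonacci-type fixed-point equation'' for the ratios $\log H_{k+1}/\log H_k$. Rather, since every large $k$ is good, one may take $\tau_k\to\overline{\tau}$ and $\tau_\ell\le\overline{\tau}+o(1)$ in the simplified inequality $\tilde{\Theta}_n\le\varepsilon$ above, which after rearrangement reads $\widehat{w}_n(\xi)\le \overline{\tau}+n-1+(n-2)/\overline{\tau}$. This one-variable function of $\overline{\tau}$ has its minimum at $\overline{\tau}=\sqrt{n-2}$; since $1\le\overline{\tau}\le (n-1)/(\widehat{w}_n(\xi)-n)+o(1)$ by \eqref{eq:tauk}, the worst case occurs at one of the endpoints, and plugging in $\overline{\tau}=(n-1)/(\widehat{w}_n(\xi)-n)$ yields a quadratic in $\widehat{w}_n(\xi)$ whose root is $\tfrac{1+\sqrt{5}}{2}n-\tfrac{\sqrt{5}-1}{2}$, while $\overline{\tau}=1$ gives $2n-2$. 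There is no recurrence and no limit-oscillation issue to ``rule out''; the argument is a direct optimisation. Your outline therefore misidentifies both the source of the golden ratio and the analytic difficulty.
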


Note that the conditional bounds $\rho_n$ coincide with those
obtained unconditionally by Poels in \eqref{eq:poelse}. 

Again all roots of $R_n$ are real and $\gamma_n$ is the largest.
The condition in (i) can be relaxed to assuming that on a subsequence
of good $k$, we have 
\[
\frac{\log P_{\ell-1}}{\log P_k} \to 1, \qquad k\to\infty
\]
(understood inside the subsequence) with $\ell=\ell(k)\ge k+1$ chosen minimal so that $\{P_{k-1}, P_k, P_{\ell}\}$ are linearly independent. A similar
relaxation of the hypothesis in (ii) can be stated.
Conversely, a stronger assumption then in (i) is 
four consecutive best
approximation polynomials being linearly independent infinitely often.
In view of~\cite{nmg}, it is questionable if either condition 
is true for all real numbers $\xi$, see the comments in 
\S~\ref{on} below. Nevertheless,
we conjecture that at least the bounds $\gamma_n$ of Theorem~\ref{T2} hold unconditionally and it would be desirable to prove this.

The quality of the bounds are illustrated in
the following table,
with decimal expansions cut off after four digits.

\begin{center}
    \begin{tabular}{ |c|c|c|c|c| }
        \hline
        n & new bound $\beta_n$ & bound $\alpha_n=\sigma_n$ derived from  \cite{buschlei} & cond. bd. $\gamma_n$ & cond. bd. $\rho_n$ \\ \hline
        2 & 2.6180 & 2.6180 & 2.6180 & 2.6180 \\
        3 & 4.3234 & 4.4142 & 4.3028 & 4.2360 \\
        4 & 6.1592 & 6.2875 & 6.1451 & 6 \\
        5 & 8.0865 & 8.2010 & 8.0791 & 8 \\
        6 & 10.0528 & 10.1382 & 10.0488 & 10 \\
        7 & 12.0352 & 12.0906 & 10.0328 & 12  \\
        8 & 14.0251 & 14.0532 & 14.0236 & 14 \\
        9 & 16.0187 & 16.0231 & 16.0177 & 16 \\
        \hline
    \end{tabular}
\end{center}

As in \cite{2018}, as an artefact of the method, our bound satisfies the asymptotics
\[
\beta_n = 2n-2 +o(1), \qquad n\to\infty,
\]
with positive error terms for each $n$, however 
the limit being approached faster than in \cite{2018}. 
The same holds for $\gamma_n, \rho_n$.
Since $\alpha_n=2n-2$ for $n\ge 10$ by \eqref{eq:bsch}, our largest value $n=9$ where an improvement is obtained in either theorem above is a natural barrier, and surpassing it would probably require significant new ideas in the method. 
\\

We emphasize again that
especially the improvement compared to \eqref{eq:FRIT} for $n=3$ is remarkable, as it shows that the previously best known bound $3+\sqrt{2}$ that had been 
obtained from two different methods in \cite{buschlei, 2018}, is not sharp.
There is no reason to believe that the new bounds $\beta_n$
of Theorem~\ref{H} are optimal for any $n\ge 3$, possibly
the same is true for the conditional bounds.
Indeed, conversely, if $n\ge 3$ it is not known if
the exponent $\widehat{w}_n$ can take a value larger than $n$ for any
real number $\xi$. If this is not the case for some $n\ge 3$, 
this would imply an affirmative answer to Wirsing's problem (and some variants) for this
value of $n$, see~\cite[Lemma~1]{davsh}.

\subsection{Ideas of the proof and related exponents}
The foundation of our method will be similar to \cite{2018}, 
using an approach inspired by parametric geometry of numbers,
however with several technical twists. Firstly, we need a claim on consecutive best approximation polynomials (see Definition~\ref{newdef}) lying in 
two-dimensonal subspaces (Lemma \ref{le1}, only needed
for Theorem~\ref{H}).
Moreover, some step from the proof in \cite{2018} is simplified (via Lemma \ref{lemur}), which is necessary for our method here. Furthermore, some
technical improvement in the treatment of parametric geometry of numbers is obtained (Lemma \ref{le2}). These new elements will be combined in a concise way to beat the bounds from~\cite{2018}.

 While we restrict ourselves to the linear form exponents here,
we want to briefly remark on the according 
exponents usually denoted $\widehat{\lambda}_n(\xi)$ for the dual problem of simultaneous approximation to consecutive powers of a real number, also
closely connected to Wirsing's problem and variants. There similar progress 
in form of (rather small) improvements of the original bound by Davenport and Schmidt \cite{davsh} have been made, some very recently. The first improvement for odd $n$ was due to Laurent \cite{laurent}, and a stronger bound
for $n=3$ due to Roy \cite{roy3}. For even $n$, the author improved the bound from \cite{davsh} in \cite{equprin, period}. Then in a very recent paper Badziahin \cite{badz} improved on the previous results for $n\ge 4$, which in turn has been refined by Poels and Roy \cite{pr} to constitute the currently best known bounds for $\widehat{\lambda}_n(\xi)$.
In private communication, D. Roy pointed out to me that
he recently obtained a very small improvement on his result~\cite{roy3} for $n=3$, in a paper in preparation.
However, as in the case of linear form
exponents, all obtained refinements compared to the original result by Davenport and Schmidt \cite{davsh} are rather small, 
moreover again it remains
unclear if the minimum value $1/n$ is exceeded for any $n\ge 3$ and any real number $\xi$. In summary,
both types of exponents remain rather poorly understood for $n\ge 3$.

\textbf{Acknowledgment}. The paper originates in the author's visit to the University of Sydney in the International
Visitor Program in 2023. The author thanks the University of Sydney
and his host Dzmitry Badziahin for the hospitality.

%
%

\section{On best approximation polynomials   } \label{prepa}

\subsection{Definition and a linear independence result} \label{pr1}

An important object for the study of exponents of Diophantine approximation are integer minimal points, used for example in~\cite{davsh67, davsh}, which are polynomials in our case.
Let us first define this sequence of best approximation polynomials $(P_k)_{k\ge 1}$ as in \cite[Definition 2.1]{2018}.

\begin{definition} \label{newdef}
For $n\geq 1$ an integer and $\xi$ a real number, an
integer polynomial $P$ of degree at most $n$ will be called
{\em best approximation polynomial associated to $(n,\xi)$}
if it minimizes $\vert P(\xi)\vert$ among all non identically 
zero integer polynomials of degree at most $n$ and height (maximum of absolute values of coefficients) at most $H(P)$. Any pair $n,\xi$ with $\xi$ not algebraic of degree at most $n$
thus
gives rise to a uniquely determined
(up to sign) infinite sequence 
of best approximation polynomials. We denote it by 
$(P_{k})_{k\geq 1}$ and the height
of $P_k$ by $H_k$.
\end{definition}

Recall that they satisfy
\begin{equation}  \label{eq:jopi}
\vert P_1(\xi)\vert > \vert P_2(\xi)\vert > \cdots, \qquad  
H_1 < H_2 < \cdots.
\end{equation}
We derive for $k\ge 2$ the quantities
\begin{equation}  \label{eq:nocce}
\mu_{k}:= -\frac{ \log |P_{k-1}(\xi)| }{\log H_k}, \qquad 
v_k:= -\frac{ \log |P_{k}(\xi)| }{\log H_k}.
\end{equation}
and see the relation to the classical exponents is given by
\begin{equation} \label{eq:conne}
\widehat{w}_n(\xi)= \liminf_{k\to\infty} \mu_k, \qquad
w_n(\xi)= \limsup_{k\to\infty} v_k.
\end{equation} 
We comprise some results that were implicitly derived in \cite{2018}, see more precisely the beginning
of the proof of \cite[Theorem 1.1]{2018} and \cite[\S 3.2]{2018}.
Following notation of \cite{2018},
let 
\begin{equation} \label{eq:Vk}
\mathscr{V}_k= \{ P_k , T P_k , \ldots, T^{n-2} P_k\}, \qquad k\ge 1,
\end{equation}
consisting of $n-1$ integer polynomials of degree at most $2n-2$.

\begin{lemma}[\cite{2018}]   \label{ichalt}
    Let $n\ge 2$ be an integer and $\xi$ a real number that satisfies
\begin{equation} \label{eq:2n}
    \widehat{w}_n(\xi)>2n-2. 
\end{equation}
Then for all large enough $k$, the polynomial $P_k$ 
from Definition \ref{newdef} is irreducible 
of degree exactly $n$. Consequenctly, for any large $k$, 
the set $\mathscr{V}_{k-1} \cup \mathscr{V}_k$ is linearly independent and thus spans a hyperplane
in the space of polynomials of degree at most $2n-2$.
Moreover we have
\begin{equation}  \label{eq:horse}
    \frac{ w_n(\xi) }{ \widehat{w}_n(\xi) } \le
    \frac{n-1}{ \widehat{w}_n(\xi)-n }.
\end{equation}
\end{lemma}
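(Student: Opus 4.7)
My plan is to prove the three assertions in turn, leveraging the Davenport--Schmidt bound \eqref{eq:2n-1}, Gelfond's lemma on heights of polynomial factors, and parametric geometry of numbers. For the degree claim I would argue by contradiction: if infinitely many $P_k$ had degree at most $n-1$, then evaluating the defining inequalities at $X = H_k$ shows $\widehat{w}_{n-1}(\xi) \ge \widehat{w}_n(\xi) > 2n-2$, contradicting \eqref{eq:2n-1} applied with $n-1$ in place of $n$ (which yields $\widehat{w}_{n-1}(\xi) \le 2n-3$). For irreducibility, I would suppose $P_k = AB$ with $\deg A = a \ge 1$, $\deg B = b \ge 1$, $a+b = n$; Gelfond's lemma gives $H(A)H(B) \asymp H_k$, while $|A(\xi)B(\xi)| = |P_k(\xi)| < H_k^{-(2n-2+\delta)}$ for some $\delta > 0$ infinitely often under the hypothesis. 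A height/value balance then forces at least one factor to realise an approximation rate exceeding $2d-1$ at its own degree $d < n$, again contradicting \eqref{eq:2n-1}. This is the classical factorization argument in the style of Davenport--Schmidt.

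Once $P_{k-1}$ and $P_k$ are known to be irreducible of degree exactly $n$ for large $k$, the linear independence of $\mathscr{V}_{k-1} \cup \mathscr{V}_k$ becomes a short algebraic exercise. A nontrivial linear relation is equivalent to an identity $f(T) P_{k-1}(T) + g(T) P_k(T) = 0$ in $\mathbb{Q}[T]$ with $\deg f, \deg g \le n-2$, not both zero. Irreducibility of $P_k$ together with $\deg g < n$ forces $g \equiv 0$, and then $f \equiv 0$. Thus $\mathscr{V}_{k-1} \cup \mathscr{V}_k$ consists of $2n-2$ linearly independent polynomials in the $(2n-1)$-dimensional space of polynomials of degree at most $2n-2$, spanning a hyperplane.

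The ratio bound \eqref{eq:horse} is the most delicate assertion and the hardest part of the lemma. My approach would be to pass to parametric geometry of numbers for the lattice of integer polynomials of degree at most $n$. For $X \in [H_k, H_{k+1})$, the successive minima of the convex bodies $\{P : H(P) \le X,\ |P(\xi)| \le Y\}$ encode $\widehat{w}_n(\xi)$ and $w_n(\xi)$ through their liminf and limsup behaviour, respectively. Irreducibility of $P_k$ ensures that $P_k, TP_k, \ldots, T^{n-1}P_k$ are linearly independent in the space of polynomials of degree at most $2n-1$, and together with $P_{k-1}$ they yield $n+1$ vectors in sufficiently general position to prevent degeneration of the minima. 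A Minkowski-second-theorem volume estimate would then produce the ratio $w_n(\xi)/\widehat{w}_n(\xi) \le (n-1)/(\widehat{w}_n(\xi) - n)$; alternatively, one may invoke the Marnat--Moshchevitin transference theorem specialised to the Veronese curve and substitute. The hypothesis $\widehat{w}_n(\xi) > 2n-2$ is precisely what keeps the PGN picture non-degenerate and delivers the stated constant, so the main technical difficulty lies in turning the geometric non-degeneracy coming from Step 1 into the sharp analytic ratio.
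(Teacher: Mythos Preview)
The paper does not give its own proof of this lemma; it is quoted from \cite{2018} (with parts originating in \cite{buschlei}), so there is no in-paper argument to compare against directly. That said, your sketch has a real gap in the first two steps.

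For the degree and irreducibility claims you aim to contradict the Davenport--Schmidt bound $\widehat{w}_d(\xi)\le 2d-1$ for some $d<n$. But the information you extract is only about the \emph{ordinary} exponent: if infinitely many $P_k$ have degree $\le n-1$, you obtain good degree-$(n{-}1)$ approximations at infinitely many heights, hence a lower bound on $w_{n-1}(\xi)$, not on $\widehat{w}_{n-1}(\xi)$. Likewise, in the factorisation step the balance $H(A)H(B)\asymp H_k$, $|A(\xi)||B(\xi)|<H_k^{-(2n-2+\delta)}$ only yields $|A(\xi)|<H(A)^{-(2n-2)}$ for one factor, i.e.\ $w_a(\xi)\ge 2n-2$; since $w_a$ can be arbitrarily large, this does not contradict \eqref{eq:2n-1}. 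The actual argument in \cite{buschlei} exploits minimality of $P_k$ together with Gelfond's inequality more carefully (a small proper factor $A$ with $H(A)\ll H_k$ would itself beat $P_k$ once one controls the implied constant against the gap $\widehat{w}_n(\xi)-(2n-2)>0$), and this is what forces irreducibility and exact degree~$n$ for every large $k$, not merely along a subsequence.

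Your linear-independence deduction for $\mathscr{V}_{k-1}\cup\mathscr{V}_k$ is correct and matches the standard reasoning. For \eqref{eq:horse}, invoking Marnat--Moshchevitin is not the route taken in the cited sources: that result bounds $w_n/\widehat{w}_n$ for arbitrary points in $\mathbb{R}^n$ and does not yield the Veronese-specific constant $(n-1)/(\widehat{w}_n(\xi)-n)$. The inequality in \cite{buschlei} instead comes from an elementary argument using that $P_{k-1},P_k$ are coprime irreducible degree-$n$ polynomials (so their resultant is a nonzero integer), which produces a Liouville-type lower bound relating $H_{k-1},H_k,|P_{k-1}(\xi)|,|P_k(\xi)|$ and, after taking logarithms and passing to lim\,sup/lim\,inf, gives exactly \eqref{eq:horse}. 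Your PGN outline does not isolate this mechanism.
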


Some claims of the lemma
originate in results from \cite{buschlei}.
We aim to prove Theorem \ref{H} by contradiction, assuming throughout we had 
\begin{equation}  \label{eq:assuan}
\widehat{w}_n(\xi)> \beta_n
\end{equation}
for some $\xi$ 
and showing that it is impossible. Therefore, as $\beta_n>2n-2$, the condition \eqref{eq:2n} is not restrictive. The same applies to Theorem~\ref{T2}.
 It will occur
frequently. 

The next, new lemma extends the claim about the dimension of
unions of consecutive $\mathscr{V}_k$ from Lemma~\ref{ichalt}. Thereby it
avoids case 2 from the proof of \cite[Theorem 1.1]{2018}
and gives us more flexibility. The proof uses similar arguments.
It can be interpreted as a partial result towards proving the conditional
bounds from~\cite[\S~2]{2018}, however per se it does not lead to an improvement in this framework. 

\begin{lemma}  \label{lemur}
Let $n\ge 2$ be an integer and $\xi$ a real number. 
Assume \eqref{eq:2n} holds.
    For all large enough, good $k$ in sense of Definition~\ref{good}, 
   the polynomials from the union 
    \[
    \mathscr{R}_k:= \mathscr{V}_{k-1} \cup \mathscr{V}_k \cup \mathscr{V}_{k+1}
    \]
    span the space of polynomials of degree at most $2n-2$.
\end{lemma}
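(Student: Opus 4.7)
The plan is to argue by contradiction. Under the standing hypothesis~\eqref{eq:2n}, Lemma~\ref{ichalt} tells us that for all sufficiently large $k$ both $P_{k-1}$ and $P_k$ are irreducible of degree exactly $n$, and the $2(n-1)=2n-2$ polynomials of $\mathscr{V}_{k-1}\cup\mathscr{V}_k$ are linearly independent. They therefore span a hyperplane $V$ in the $(2n-1)$-dimensional space of polynomials of degree at most $2n-2$. Suppose, toward a contradiction, that $\mathscr{R}_k$ does not span this whole space. Then $\mathscr{V}_{k+1}\subseteq V$, so for each $i\in\{0,1,\ldots,n-2\}$ one can write
\[
T^i P_{k+1}(T) = A_i(T)\,P_{k-1}(T) + B_i(T)\,P_k(T)
\]
with $A_i,B_i\in\mathbb{Q}[T]$ of degree at most $n-2$.

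The core step is a rigidity coming from the coprimality of $P_{k-1}$ and $P_k$ in $\mathbb{Q}[T]$. Both are irreducible of degree $n$, and being primitive best approximation polynomials associated to distinct heights $H_{k-1}<H_k$ with $|P_{k-1}(\xi)|>|P_k(\xi)|$, they cannot be scalar multiples of each other; hence they are coprime. Subtracting $T$ times the identity for index $i-1$ from the identity for index $i$ yields
\[
(T A_{i-1}-A_i)\,P_{k-1} + (T B_{i-1}-B_i)\,P_k = 0.
\]
Therefore $P_k$ divides $T A_{i-1}-A_i$ in $\mathbb{Q}[T]$, but $\deg(T A_{i-1}-A_i)\le n-1 < n=\deg P_k$, so this polynomial is zero, forcing $A_i = T A_{i-1}$ and, analogously, $B_i = T B_{i-1}$. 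Iterating from $i=0$ gives $A_i = T^i A_0$ and $B_i = T^i B_0$ for all $i=0,\ldots,n-2$. For $A_{n-2}=T^{n-2}A_0$ to have degree at most $n-2$, the polynomial $A_0$ must be constant, and likewise $B_0$. Thus $P_{k+1}=a P_{k-1}+b P_k$ for constants $a,b\in\mathbb{Q}$, directly contradicting that $k$ is good.

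The argument is essentially a rigidity result and does not require any further input from parametric geometry of numbers beyond what Lemma~\ref{ichalt} already supplies. The only delicate points are to verify that $P_{k-1}$ and $P_k$ are coprime, rather than merely irreducible of the same degree, and to track the degree constraint $\deg A_i\le n-2$ carefully through the iteration. Both are straightforward, so I would not expect a serious obstacle in writing out the details.
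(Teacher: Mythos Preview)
Your proof is correct and follows essentially the same approach as the paper: both exploit that $P_{k-1}$ and $P_k$ are coprime in $\mathbb{Q}[T]$ (being distinct irreducibles of degree $n$) to force any relation $C\,P_{k-1}+D\,P_k=0$ with $\deg C,\deg D<n$ to be trivial. The only organisational difference is that the paper compares just two strategically chosen indices ($i=0$ and $i=n-m-1$ where $m=\deg Q_1$) to reach the degree contradiction, whereas you iterate the recursion $A_i=TA_{i-1}$ through all $i$ to conclude $A_0,B_0$ are constants; your version is slightly more systematic and yields the marginally stronger intermediate statement that $\mathscr{V}_{k+1}\subseteq\operatorname{span}(\mathscr{V}_{k-1}\cup\mathscr{V}_k)$ forces $P_{k+1}\in\operatorname{span}_{\mathbb{Q}}(P_{k-1},P_k)$.
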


\begin{remark}
The linear independence assumption of the lemma is clearly also needed, 
else the union spans a space of dimension at most $2n-2$ (with equality
if \eqref{eq:2n} holds).
\end{remark}

\begin{remark}  \label{remb}
 The proof below shows that the claim holds for any three linearly independent irreducible polynomials of exact degree $n$, in particular upon \eqref{eq:2n} by Lemma \ref{ichalt} for any linearly independent triple of best approximation polynomials.
\end{remark}

\begin{proof} By Lemma \ref{ichalt}, it suffices to show that some polynomial from $\mathscr{V}_{k+1}$
does not belong to the span of $\mathscr{V}_{k-1}\cup \mathscr{V}_{k}$. If $P_{k+1}\in \mathscr{V}_{k+1}$ is not contained in this space, then
we are done. So assume it does belong to the span. Then we can write
\begin{equation} \label{eq:yYy}
    P_{k+1}= Q_1 P_{k-1} + Q_2 P_{k}
\end{equation}
for rational polynomials $Q_i=Q_{i,k}\in \mathbb{Q}[T]$
of degrees at most
\[
m:=\max_{i=1,2} \deg Q_i\le n-2.
\]
On the other hand, $m \ge 1$ follows from the linear independence 
assumption of the lemma. 
Without loss of generality, assume $\deg Q_1=m$, the other case works analogously. Then consider
$T^{n-m-1} P_{k+1}$ which has degree at most $2n-2$ so it lies in $\mathscr{V}_{k+1}$. 
Again if it does not lie in the span 
of $\mathscr{V}_{k-1} \cup \mathscr{V}_{k}$ we are done. So we can assume it does, which again means that
there is some identity
\begin{equation} \label{eq:V1}
T^{n-m-1} P_{k+1} = B_1 P_{k-1} + B_2 P_{k} 
\end{equation}
with rational polynomials $B_i\in \mathbb{Q}[T]$ of degrees 
\begin{equation}  \label{eq:Dgree}
    \deg B_i \le n-2, \qquad i=1,2.
\end{equation}
On the other hand, by \eqref{eq:yYy} we can write
\begin{equation} \label{eq:V2}
A_1 P_{k-1} + A_2 P_{k} = T^{n-m-1} P_{k+1},
\end{equation}
where $A_i\in \mathbb{Q}[T]$, $i=1,2$, 
are rational polynomials given as $A_i= T^{n-m-1}Q_i$. Note that $\deg A_1=(n-m-1)+m=n-1$. Hence
by \eqref{eq:Dgree}
also
\begin{equation}  \label{eq:grad}
    \deg(A_1-B_1)=n-1.
\end{equation}
Now from \eqref{eq:V1}, \eqref{eq:V2} we have an identity
\[
B_1 P_{k-1} + B_2 P_{k}  =  T^{n-m-1} P_{k+1}= A_1 P_{k-1} + A_2 P_{k}.
\]
This yields to an identity over $\mathbb{Q}[T]$ given as
\[
(A_1-B_1)P_{k-1} = -(A_2-B_2)P_{k},
\]
and multiplying with the common denominator we get an equality 
\[
(\tilde{A}_1-\tilde{B}_1)P_{k-1}= -(\tilde{A}_2-\tilde{B}_2)P_{k}
\]
over
integer polynomials $\tilde{A}_i, \tilde{B}_i\in \mathbb{Z}[T]$.
So since $P_{k-1}, P_{k}$ are distinct and irreducible of degree exactly $n$
by Lemma \ref{ichalt}, we must have that $P_{k}$ divides $\tilde{A}_1-\tilde{B}_1$ over $\mathbb{Z}[T]$, but since the latter has degree $n-1$ by \eqref{eq:grad}, this is impossible. The lemma is proved. 
\end{proof}

\subsection{On consecutive best approximations in two-dimensional subspaces} \label{on}

As alluded by \eqref{eq:conne}, for studying uniform exponents,
it is important to understand how fast consecutive best
approximations occur. Hence
we define
\begin{equation}  \label{eq:tk}
\tau_k= \frac{ \log H_{k} }{\log H_{k-1} } > 1, \qquad k\ge 2,
\end{equation}
that is
    \[
    H_{k} = H_{k-1}^{\tau_k}.
    \]
    All error terms below will be understood as $k\to\infty$.
Then, upon assuming \eqref{eq:2n}, by \eqref{eq:horse} and a well-known argument (see for example
\cite[Lemma 1]{ichmj}) we have
\begin{equation}  \label{eq:tauk}
    1< \tau_k \le \frac{ w_n(\xi) }{ \widehat{w}_n(\xi) } + o(1) \le
    \frac{n-1}{ \widehat{w}_n(\xi)-n }+ o(1).
\end{equation}
Note that only the most right estimate of \eqref{eq:tauk} requires a condition
on $\xi$.
Furthermore as a consequence of \eqref{eq:conne} we have
\begin{equation}  \label{eq:T}
  H_{k}^{-w_n(\xi)/\tau_k - o(1) }= H_{k-1}^{-w_n(\xi)-o(1)} \ll  \vert P_{k-1}(\xi)\vert \ll H_{k}^{-\widehat{w}_n(\xi) + o(1) }.
\end{equation}
The next lemma, only required for the proof of Theorem~\ref{H},
estimates for how long a set of consecutive best approximation polynomials can lie in a two-dimensional subspace.
The proof follows closely ideas of Davenport and Schmidt \cite{davsh67}
and refines it in a quantitative way. As in \cite{davsh67},
it is not specific
to points on the Veronese curve and can be formulated for linear forms
in any $n$ real variables that are $\mathbb{Q}$-linearly independent 
together with $\{1\}$. 

\begin{definition} \label{elle}
	Given $n,\xi$, for $k\ge 2$ an integer, let $\ell=\ell(k)\ge k+1$ be the maximal integer so that $P_{k-1}, P_{k}, \ldots, P_{\ell-1}$ 
	from Definition~\ref{newdef}
	lie in a two-dimensional space (spanned by $P_{k-1}, P_{k}$).
\end{definition}

Note that $k$ is good in sense of Definition~\ref{good} if and only if $\ell(k)=k+1$.

\begin{lemma}  \label{le1}
    Let $n\ge 2$ be an integer, $\xi$ be a transcendental real number, 
    and let $(P_j)_{j\ge 1}$ be the sequence of best approximation polynomials for degree $n$ as in Definition \ref{newdef}. For $k\ge 2$ an integer,
    assume
      \begin{equation}  \label{eq:zweii}
     H_{k} > 2 \cdot H_{k-1}.
    \end{equation}
    Let $\ell=\ell(k)\ge k+1$ be as in Definition~\ref{elle} and
    let $v_{k-1}$ be as in \eqref{eq:nocce}.
    Then we have
    \begin{equation}  \label{eq:7}
        \frac{ \log H_{\ell-1} }{ \log H_{k} } \le \frac{ \frac{v_{k-1}}{\tau_k} -1 }{ \widehat{w}_n(\xi)-1 }+ o(1), \qquad k\to\infty.
    \end{equation}
    In particular
    \begin{equation} \label{eq:ratio}
    \frac{ \log H_{\ell-1} }{ \log H_{k} } \le \frac{ \frac{w_n(\xi)}{\tau_k} -1 }{ \widehat{w}_n(\xi)-1 }+ o(1), \qquad k\to\infty.
    \end{equation}
    Hence $P_{k-1}, P_{k}, P_{\ell}$ are linearly independent and
    \begin{equation} \label{eq:schwimm}
     \frac{ \log H_{\ell} }{ \log H_{k} } \le \frac{ \frac{w_n(\xi)}{\tau_k} -1 }{ \widehat{w}_n(\xi)-1 } \cdot \tau_{\ell}+ o(1), \qquad k\to\infty.
    \end{equation}
\end{lemma}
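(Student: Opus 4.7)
The plan is to follow the classical strategy of Davenport--Schmidt \cite{davsh67} and reduce the problem to a two-dimensional Minkowski-type argument inside the subspace spanned by $P_{k-1}, P_k$.

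First I would set up the 2D lattice: by the maximality of $\ell$ in Definition~\ref{elle}, all polynomials $P_{k-1}, P_k, \ldots, P_{\ell-1}$ lie in the real $2$-dimensional subspace $W := \mathrm{span}_{\mathbb{R}}(P_{k-1}, P_k)$, while $P_\ell \notin W$. Identifying integer polynomials of degree $\le n$ with vectors in $\mathbb{Z}^{n+1}$, the $P_j$ for $k-1 \le j \le \ell-1$ all lie in the rank-2 integer lattice $\Lambda := W \cap \mathbb{Z}^{n+1}$. Since each $P_j$ is a best approximation of the full problem, it is in particular a best approximation restricted to $\Lambda$; hence the sequence $(P_j)_{k-1 \le j \le \ell-1}$ is a consecutive sequence of best approximations inside $\Lambda$ with respect to the linear functional $L_\xi(P) := P(\xi)$. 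The hypothesis $H_k > 2 H_{k-1}$ will be used to guarantee non-degeneracy at the initial pair.

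The central input is a 2D Minkowski-type relation: there exists a lattice constant $\Delta = \Delta(W, \xi) > 0$ such that for every pair of consecutive best approximations $P_j, P_{j+1}$ in $\Lambda$ (with $k-1 \le j \le \ell-2$), one has $|P_j(\xi)| \cdot H_{j+1} \asymp \Delta$. The upper bound follows from the contrapositive of Minkowski's first theorem applied to the symmetric convex body $\{v \in W : |L_\xi(v)| \le Y, \|v\|_\infty \le X\}$: no nonzero lattice point of $\Lambda$ lies in it when $X < H_{j+1}$ and $Y < |P_j(\xi)|$, forcing its 2D volume (which scales as $XY$ times a constant depending on $W$) to be bounded by a multiple of $\Delta$. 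The matching reverse bound comes from Minkowski's second theorem in dimension two. Applying the relation at both ends of the range: at $j = k-1$,
\[
\Delta \asymp |P_{k-1}(\xi)| \cdot H_k = H_{k-1}^{-v_{k-1}} \cdot H_k = H_k^{\,1 - v_{k-1}/\tau_k},
\]
and at $j = \ell - 2$, combined with the uniform approximation bound $|P_{\ell-2}(\xi)| \le H_{\ell-1}^{-\widehat{w}_n(\xi) + o(1)}$ (valid because $P_{\ell-2}$ is the best approximation polynomial of height strictly below $H_{\ell-1}$, applying the defining inequality for $\widehat{w}_n(\xi)$ at $X = H_{\ell-1} - 1$):
\[
\Delta \asymp |P_{\ell-2}(\xi)| \cdot H_{\ell-1} \le H_{\ell-1}^{\,1 - \widehat{w}_n(\xi) + o(1)}.
\]

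Comparing the two estimates for $\Delta$ and noting that both exponents are negative (since $\widehat{w}_n(\xi) > 1$ and also $v_{k-1}/\tau_k \ge \widehat{w}_n(\xi)$, as $P_{k-1}$ is the best approximation of height below $H_k$), taking logarithms and dividing yields
\[
\frac{\log H_{\ell-1}}{\log H_k} \le \frac{v_{k-1}/\tau_k - 1}{\widehat{w}_n(\xi) - 1} + o(1),
\]
which is \eqref{eq:7}. Then \eqref{eq:ratio} follows from $v_{k-1} \le w_n(\xi) + o(1)$, a consequence of \eqref{eq:conne}. Linear independence of $P_{k-1}, P_k, P_\ell$ is immediate from the maximality defining $\ell$, and \eqref{eq:schwimm} follows from \eqref{eq:ratio} via $H_\ell = H_{\ell-1}^{\tau_\ell}$. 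The main obstacle will be making the two-sided 2D estimate $|P_j(\xi)| H_{j+1} \asymp \Delta$ rigorous, uniformly over $j \in [k-1, \ell-2]$ and with implicit constants controlled so that the $o(1)$ error terms above remain legitimate as $k \to \infty$. This is precisely where the hypothesis $H_k > 2 H_{k-1}$ is used: it prevents degenerate configurations at the initial pair $(P_{k-1}, P_k)$ that would otherwise spoil the 2D Minkowski comparison.
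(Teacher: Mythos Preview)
Your approach is the same in spirit as the paper's: both reduce to the two-dimensional Davenport--Schmidt invariant inside the rank-two lattice $\Lambda = W\cap\mathbb{Z}^{n+1}$ and compare its value at the initial pair $(P_{k-1},P_k)$ with its value at the terminal pair $(P_{\ell-2},P_{\ell-1})$. The difference lies in how this invariant is accessed. The paper uses the \emph{exact} determinant identity
\[
\vert x_{k-1} P_{k}(\xi) - x_{k} P_{k-1}(\xi)\vert = \vert x_{\ell-2} P_{\ell-1}(\xi) - x_{\ell-1} P_{\ell-2}(\xi)\vert,
\]
valid because consecutive best approximations in a rank-two lattice form a basis; here $x_j$ is any fixed coordinate, and one chooses the coordinate for which $x_k=H_k$. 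The hypothesis $H_k>2H_{k-1}$ is then used concretely and transparently: it guarantees that the cross term $\vert x_{k-1}P_k(\xi)\vert\le H_{k-1}\vert P_{k-1}(\xi)\vert$ is at most half of the main term $H_k\vert P_{k-1}(\xi)\vert$, yielding the clean lower bound $\tfrac{1}{2}H_k^{\,1-v_{k-1}/\tau_k}$.

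Your Minkowski route recovers the same comparison up to absolute constants (since in dimension two the implicit constants in Minkowski's second theorem are universal, and $\Delta$ cancels between the two endpoints), so your worry about $k$-dependent constants is in fact harmless once this is spelled out. Indeed, carried through carefully your argument does not even require \eqref{eq:zweii}, in line with the paper's own remark that the hypothesis is probably unnecessary. What your write-up should tighten is the vague claim that \eqref{eq:zweii} ``prevents degenerate configurations'': either drop it (and justify the uniform Minkowski bound directly), or replace your Minkowski step by the determinant identity, which makes the whole thing a two-line computation with explicit constants.
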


\begin{remark}
    The factor two in \eqref{eq:zweii} can be replaced by any value larger than $1$. The claim probably remains true without condition \eqref{eq:zweii} at all, however for us this condition will not cause major problems below.
\end{remark}

\begin{remark} \label{Rr}
If \eqref{eq:2n} holds, then
    we may use \eqref{eq:horse} to eliminate $w_n(\xi)$ and express the right hand sides in terms of $n, \widehat{w}_n(\xi), \tau_k, \tau_{\ell}$ only. For example \eqref{eq:schwimm} becomes 
    \[
    \frac{ \log H_{\ell} }{ \log H_{k} } \le \frac{ (n-1)\widehat{w}_n(\xi)-\tau_k(\widehat{w}_n(\xi)-n) }{\tau_k (\widehat{w}_n(\xi)-1)(\widehat{w}_n(\xi)-n) }\cdot \tau_{\ell} + o(1), \qquad k\to\infty.
    \]
    This will be used in the proof of Theorem \ref{B} below.
\end{remark}

We will only explicitly need \eqref{eq:schwimm} below.
It implies that we cannot have that
all but finitely many best approximations lie in a two-dimensional space, a claim from \cite[\S 4]{davsh67}. The latter is 
false for three-dimensional subspaces
and general real vectors of any dimension $n\ge 3$ 
(possibly none of the exceptions 
lie on the Veronese curve though), 
see \cite{nmg}.
Note further that the ratio in \eqref{eq:ratio} decreases to $1$ as $\tau_k$ approaches its upper bound 
from \eqref{eq:tauk}.

\begin{proof}
    Following the method of Davenport and Schmidt \cite[\S 4]{davsh67}
    stated for $n=2$ only but which generalizes to any $n$ by the same argument, we see the following: Whenever
    consecutive best approximations $P_{k-1}, P_{k}, \ldots, P_{\ell-1}$ lie in a two-dimensional space, denoting $x_j>0$ the leading coefficient of $P_j$ 
    we have 
    \begin{equation}  \label{eq:1}
    \vert x_{k-1} P_{k}(\xi) - x_{k} P_{k-1}(\xi)\vert= \vert x_{\ell-2} P_{\ell-1}(\xi) - x_{\ell-1} P_{\ell-2}(\xi)\vert.
    \end{equation}
    The choice of the leading coefficient is not critical, the according identity remains true when choosing the coefficient of any other power (the same power for all $P_j$) by the same underlying determinant argument, not explicitly carried out in \cite{davsh67}.
    Hence we may assume 
    \[
    x_{k}= H_{k}
    \]
    as otherwise we choose instead the coefficient for the power that induces $H_{k}$ and the argument below works analogously.
    %
    Now by \eqref{eq:zweii}, \eqref{eq:jopi} 
    and as $x_{k-1}\le H_{k-1}$ is obvious, 
    this leads to 
    \begin{align}
    \vert x_{k-1} P_{k}(\xi) - x_{k} P_{k-1}(\xi)\vert &\ge 
    \vert x_{k} P_{k-1}(\xi)\vert - \vert x_{k-1} P_{k}(\xi)\vert  \nonumber
     \\ &\ge \vert H_{k} P_{k-1}(\xi)\vert - \vert H_{k-1} P_{k}(\xi)\vert  \nonumber    \\
     &\ge \vert H_{k} P_{k-1}(\xi)\vert - \vert (H_{k+1}/2) P_{k}(\xi)\vert
     \nonumber \\ &\ge \vert H_{k} P_{k-1}(\xi)\vert - \vert (H_{k}/2) P_{k-1}(\xi)\vert  \nonumber
    \\ &= \frac{1}{2}\cdot H_{k} \vert P_{k-1}(\xi)\vert \nonumber \\
    &= \frac{1}{2}H_{k}\cdot H_{k}^{- v_{k-1}/\tau_k }  \nonumber \\
    & = \frac{1}{2}H_{k}^{1- v_{k-1}/\tau_k }.   \label{eq:2}
    \end{align}
    On the other hand \eqref{eq:T}, \eqref{eq:jopi} imply
    \begin{align}
         \vert x_{\ell-2} P_{\ell-1}(\xi) - x_{\ell-1} P_{\ell-2}(\xi)\vert 
         &\le
      \vert x_{\ell-2} P_{\ell-1}(\xi)\vert + \vert x_{\ell-1} P_{\ell-2}(\xi)\vert \nonumber \\ &\le 2 x_{\ell-1} \vert P_{\ell-2}(\xi)\vert 
     \nonumber \\ & \le 2 H_{\ell-1} \vert P_{\ell-2}(\xi)\vert  \nonumber 
      \\ &\ll H_{\ell-1}^{1- \widehat{w}_n(\xi)+o(1) }. \label{eq:3}
    \end{align}
    Combining the three claims \eqref{eq:1}, \eqref{eq:2}, \eqref{eq:3} gives the estimate \eqref{eq:7}. Since $v_{k-1}\leq w_n(\xi)+o(1)$ as $k\to\infty$ by \eqref{eq:conne}, we infer \eqref{eq:ratio}. The last claim \eqref{eq:schwimm} follows in turn from \eqref{eq:ratio}
    via
    \[
    \frac{ \log H_{\ell} }{ \log H_{k} }=  
    \frac{ \log H_{\ell-1} }{ \log H_{k} } \cdot \frac{ \log H_{\ell} }{ \log H_{\ell-1} }
    \le \frac{ \frac{w_n(\xi)}{\tau_k} -1 }{ \widehat{w}_n(\xi)-1 } \cdot \tau_{\ell}+ o(1), \qquad k\to\infty.
    \]
    The lemma is proved.
%
\end{proof}

\section{Parametric geometry of numbers: Introduction and a lemma} \label{pgn}
Our final prerequisite lemma will be formulated in the language of parametric
geometry of numbers, which will also
be used in the proofs of the main claims Theorem \ref{A}, \ref{B} below. 
We only give a brief summary of the
most important notation.
We consider the classical lattice point problem
induced by linear form (polynomial) approximation 
to $(\xi,\xi^2, \ldots,\xi^{2n-2})$, as in \cite{2018}.
Concretely, we consider the successive minima functions 
$\kappa_j(Q), 1\le j\le 2n-1$, with respect
to the lattice and parametrised family of convex bodies
of constant volume
\begin{align*}
\Lambda_{\xi}&=\{  (a_1, \ldots, a_{2n-2}, a_0+ a_1\xi +\cdots+a_{2n-2}\xi^{2n-2})\in \mathbb{R}^{2n-1}: a_i\in \mathbb{Z} \}, \\ K(Q)&= [-Q^{\frac{1}{2n-2}}, Q^{\frac{1}{2n-2}}]^{2n-2}\times [-Q^{-1},Q^{-1}]\subseteq \mathbb{R}^{2n-1}, \qquad Q>1,
\end{align*}
in $\mathbb{R}^{2n-1}$ and we derive the parametric functions
\[
L_{j}(q)= \log \kappa_{j}(e^q), \qquad q>0,\; 1\le j\le 2n-1.  
\]
These are piecewise linear with slopes among $\{ -1/(2n-2), 1\}$,
as locally they are realised by the trajectory
of some integer polynomial $P$ of degree
at most $2n-2$ and height $H_P$, defined as
\begin{equation} \label{eq:Trajan}
L_P(q)= \max\left\{   \log H_P - \frac{q}{2n-2} ,\; \log \vert P(\xi)\vert + q\right\}.
\end{equation}
In other words, for each $q\ge 0$ there are linearly independent polynomials $P^{(1)}, \ldots, P^{(2n-1)}$ as above (depending on $q$) with integer coefficients and of heights $H^{(1)}, \ldots,H^{(2n-1)}$, so that
\begin{equation}  \label{eq:E}
L_j(q)=L_{P^{(j)}}(q)= \max\left\{   \log H^{(j)} - \frac{q}{2n-2} ,\; \log \vert P^{(j)}(\xi)\vert + q\right\}, \quad 1\le j\le 2n-1.
\end{equation}
The polynomial $P^{(1)}$ minimizes $L_P$ over all 
relevant choices of $P$, hence 
\begin{equation*} 
L_1(q)= \min L_P(q)= \min \max\left\{  \log H_P - \frac{q}{2n-2} ,\; \log \vert P(\xi)\vert + q \right\}
\end{equation*}
with minimum taken over all non-zero integer polynomials $P$ of degree
at most $2n-2$. 

In Lemma \ref{le2} as well as in Theorems \ref{A}, \ref{B} 
below, we will
consider the trajectories \eqref{eq:Trajan} for $P=P_j$
the best approximation polynomials
of degree at most $n$ (which their naturally inclusion 
in the set of polynomials of degree at most $2n-2$) as in 
Definition \ref{newdef}, hence
\begin{equation} \label{eq:121}
    L_{P_j}(q)= \max\left\{   \log H_j - \frac{q}{2n-2} ,\; \log \vert P_{j}(\xi)\vert + q\right\}, \qquad j\ge 1.
\end{equation}
More generally, we will consider $L_P(q)$ for $P\in \mathscr{V}_k$ as defined in \eqref{eq:Vk}. 
We recall that Minkowski's Second Convex Body Theorem is equivalent to
\begin{equation}  \label{eq:MCBT}
    \left\vert\sum_{j=1}^{2n-1} L_{j}(q)\right\vert= O(1).
\end{equation}
The constant depends on $n$ only.
We recall that the quality of approximation $\vert P(\xi)\vert$ 
induced by some integer polynomial $P$ is essentially encoded
by the quotient $L_P(q)/q$ at its
unique global minimum point $q$ where
the rising and decaying part of the trajectory $L_P$ in \eqref{eq:Trajan} coincide. Now we can finally state our lemma.

\begin{lemma}  \label{le2}
Let $n\ge 2$ be an integer and $\xi$ be a real number satisfying
\eqref{eq:2n}.
    Let $P_{k-1}, P_{k}$ be two consecutive minimal polynomials
    with respect to approximation
    to $(\xi,\xi^2,\ldots,\xi^n)$
    as in Definition \ref{newdef}.
     Consider 
the combined graph
with respect to $(\xi,\xi^2,\ldots,\xi^{2n-2})$, with induced piecewise linear functions $L_{P_j}$ as in \eqref{eq:121} and successive minima 
functions $L_1,\ldots,L_{2n-1}$.
Let $q_{k}$ be the unique point where $L_{P_{k-1}}(q_{k})= L_{P_{k}}(q_{k})$
and $s_{k}< q_{k}$ be the place where $L_{P_{k-1}}$ is minimized, i.e. so that $L_{P_{k-1}}(s_{k})= \min_{q>0} L_{P_{k-1}}(q)$.
Then 
    \[
L_{2n-1}(q_{k}) \ge -(2n-2)\cdot L_{P_{k-1}}(q_{k}) + \frac{2n^2-5n+2}{2n-2}(q_{k}-s_{k}) - O(1).
\]
\end{lemma}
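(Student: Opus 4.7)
The plan is to apply a determinant inequality to $2n-1$ linearly independent integer polynomials of degree at most $2n-2$. The natural choice is to take the $2n-2$ polynomials in $\mathscr{V}_{k-1}\cup\mathscr{V}_k$, linearly independent by Lemma~\ref{ichalt} under the assumption~\eqref{eq:2n}, together with an additional polynomial $P_\ast$ realizing $L_{2n-1}(q_k)$ that lies outside their span. Such a $P_\ast$ exists because, by the definition of successive minima, there are $2n-1$ linearly independent lattice points in the body $e^{L_{2n-1}(q_k)}K(e^{q_k})$, and at least one of them must sit outside the hyperplane spanned by $\mathscr{V}_{k-1}\cup\mathscr{V}_k$; in particular, $P_\ast$ obeys $\log H(P_\ast)\le L_{2n-1}(q_k)+q_k/(2n-2)$ and $\log|P_\ast(\xi)|\le L_{2n-1}(q_k)-q_k$.

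The first step is to record, writing $\mu:=L_{P_{k-1}}(q_k)=L_{P_k}(q_k)$, the identities
\begin{equation*}
\log H_{k-1} = \mu - q_k + \frac{2n-1}{2n-2}\,s_k,\qquad \log H_k = \mu + \frac{q_k}{2n-2},\qquad \log|P_{k-1}(\xi)|=\mu-q_k,
\end{equation*}
together with $\log|P_k(\xi)| = \mu + q_k/(2n-2) - (2n-1)s_k^\prime/(2n-2)$, where $s_k^\prime$ denotes the minimum point of $L_{P_k}$ (satisfying $s_k^\prime>q_k$ since at $q_k$ the trajectory $L_{P_k}$ is still on its decreasing branch). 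All of these follow from the definitions of $s_k$, $q_k$ combined with the piecewise-linear structure of $L_{P_j}$. The polynomials $T^iP_j$ satisfy $H(T^iP_j)=H_j$ and $|T^iP_j(\xi)|\asymp|P_j(\xi)|$, with implicit constants depending only on $n,\xi$.

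Next, since the change of basis from $(1,X,\ldots,X^{2n-2})$ to $(X,X^2,\ldots,X^{2n-2},\mathrm{ev}_\xi)$ has determinant $\pm 1$, the $(2n-1)\times(2n-1)$ integer matrix $D$ encoding $\mathscr{V}_{k-1}\cup\mathscr{V}_k\cup\{P_\ast\}$ in the latter basis satisfies $|\det D|\ge 1$. Expanding by Leibniz and splitting the sum according to which row contributes to the last column gives
\begin{equation*}
1\le|\det D|\le (2n-2)!\sum_{i_0=1}^{2n-1}|P_{i_0}(\xi)|\prod_{i\ne i_0}H(P_i) = T_1+T_2+T_3,
\end{equation*}
where $T_1, T_2, T_3$ collect the contributions with $i_0$ indexing a polynomial from $\mathscr{V}_{k-1}$, $\mathscr{V}_k$, or $P_\ast$, respectively. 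Substitution of the preliminary identities produces
\begin{equation*}
\log T_1 = (2n-2)\mu - \frac{(n-1)(2n-3)}{2n-2}q_k + \frac{(n-2)(2n-1)}{2n-2}s_k + \log H(P_\ast) + O(1),
\end{equation*}
with a parallel computation for $\log T_3$ replacing $\log H(P_\ast)$ by $\log|P_\ast(\xi)|$ and the $s_k$-coefficient by $(n-1)(2n-1)/(2n-2)$. The term $\log T_2$ differs from $\log T_1$ by $(s_k-s_k^\prime)(2n-1)/(2n-2)<0$, so $T_2\le T_1$ up to constants; the maximum of the three is therefore (essentially) $T_1$ or $T_3$. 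If $T_1$ dominates, then $\log T_1\ge -O(1)$ combined with $L_{P_\ast}(q_k)\ge\log H(P_\ast)-q_k/(2n-2)$ yields
\begin{equation*}
L_{2n-1}(q_k)\ge L_{P_\ast}(q_k)\ge -(2n-2)\mu+\frac{2n^2-5n+2}{2n-2}(q_k-s_k)-O(1),
\end{equation*}
using $(n-1)(2n-3)/(2n-2)-1/(2n-2)=(n-2)(2n-1)/(2n-2)=(2n^2-5n+2)/(2n-2)$. If $T_3$ dominates instead, the analogous manipulation produces a strictly stronger bound with coefficient $(n-1)(2n-1)/(2n-2)$ on $(q_k-s_k)$, so the claim holds in either case.

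The principal obstacle will be the bookkeeping: one must carefully track the coefficients of $\mu$, $q_k$, and $s_k$ through each $\log T_i$ so that, after substitution and rearrangement, they combine precisely into the advertised $(2n^2-5n+2)/(2n-2)$, and one must confirm that $T_1$ is the weaker of the two relevant cases so that the worst-case outcome of the determinant bound still implies the stated inequality. No individual step is conceptually deep, but the dense algebra leaves little margin for error.
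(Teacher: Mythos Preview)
Your argument is correct, but it follows a genuinely different path from the paper's proof. The paper works at the point $s_k$ rather than $q_k$: it bounds $\sum_{j=1}^{2n-2} L_j(s_k)$ from above by $(n-1)(L_{P_{k-1}}(s_k)+L_{P_k}(s_k))$ using the $2n-2$ linearly independent polynomials in $\mathscr{V}_{k-1}\cup\mathscr{V}_k$, applies Minkowski's second theorem~\eqref{eq:MCBT} to get a lower bound for $L_{2n-1}(s_k)$, and then transports this to $q_k$ via the slope estimate $L_{2n-1}(q_k)\ge L_{2n-1}(s_k)-(q_k-s_k)/(2n-2)$. The gain $(n-\tfrac{3}{2})(q_k-s_k)$ comes from the fact that $L_{P_{k-1}}+L_{P_k}$ rises with net slope $1-\tfrac{1}{2n-2}$ on $(s_k,q_k)$, so evaluating at $s_k$ is strictly better than at $q_k$.

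Your route instead runs a direct determinant (cofactor) estimate at $q_k$ on the $(2n-1)$ independent polynomials $\mathscr{V}_{k-1}\cup\mathscr{V}_k\cup\{P_\ast\}$, where $P_\ast$ is chosen outside the hyperplane with $L_{P_\ast}(q_k)\le L_{2n-1}(q_k)$. This is morally the same information as Minkowski's theorem, but unpacked by hand; the price is the case split $T_1$ versus $T_3$ and the explicit substitution of the identities for $\log H_{k-1}$, $\log H_k$, $\log|P_{k-1}(\xi)|$. The paper's version is shorter and avoids both the case analysis and any mention of $P_\ast$ or $s_k'$; your version is more self-contained in that it does not invoke~\eqref{eq:MCBT} as a black box. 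One small wording point: $P_\ast$ need not literally \emph{realize} $L_{2n-1}(q_k)$, only satisfy $L_{P_\ast}(q_k)\le L_{2n-1}(q_k)$; your subsequent inequalities use only this weaker fact, so the argument is unaffected.
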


The gain compared to the method in \cite[Theorem 1.1]{2018} 
is the non-negative expression
$((2n^2-5n+2)/(2n-2))\cdot (q_{k}-s_{k})$. If $\tau_k$ exceeds some $\theta>1$ strictly, then the estimate $q_{k}-s_k\gg_{\theta} q_{k}$ can be verified, leading to an improvement for $n\ge 3$. On the other hand,
the identity $2n^2-5n+2=0$ for $n=2$ naturally
agrees with the bound \eqref{eq:DSC} for $n=2$ from  \cite{davsh} (also obtained with different proofs in \cite{buschlei} and \cite{2018}) 
being optimal.

The proof of Lemma \ref{le2} is rather straightforward.

\begin{proof}
Since the slope of any trajectory is at least $-1/(2n-2)$, we clearly have
\begin{equation}  \label{eq:lesbos}
    L_{2n-1}(q_{k}) \ge L_{2n-1}(s_k) - \frac{q_{k} - s_k}{2n-2}.
\end{equation}
On the other hand, it is clear that in the
interval $(s_k, q_{k})$ the function $L_{P_{k-1}}$ increases with slope $1$, whereas $L_{P_{k}}$ has slope $-1/(2n-2)$. Thus
\begin{equation} \label{eq:RF}
L_{P_{k-1}}(q_{k}) +  L_{P_{k}}(q_{k}) \ge L_{P_{k-1}}(s_k) +  L_{P_{k}}(s_k) + (1-\frac{1}{2n-2})(q_{k}-s_k).
\end{equation}
Since we assume \eqref{eq:2n},
we may apply Lemma \ref{ichalt} to see
that $\mathscr{V}_{k-1}\cup \mathscr{V}_{k}$ span a space of dimension $2n-2$ (hyperplane).
We may assume $\xi\in (0,1)$ so that $\vert P_j(\xi)\vert= \max \vert P(\xi)\vert$ with maximum taken over $P\in \mathscr{V}_j$, for all $j$. Since the heights of all $P\in \mathscr{V}_j$ coincide (all equal to $H_j$), via \eqref{eq:Trajan}
in particular this implies
$L_{P_j}(q)$ maximise $L_P(q)$ on $(0,\infty)$
among all $P\in \mathscr{V}_j$, for all $j \ge 1$, i.e.
\begin{equation}  \label{eq:maxi}
    L_{P_j}(q)= \max_{P\in \mathscr{V}_j  } L_P(q), \qquad q>0,\; j\ge 1.
\end{equation} 
Then \eqref{eq:RF} implies
\begin{align*}
\sum_{j=1}^{2n-2} L_{j}(s_k) &\le 
\sum_{P\in \mathscr{V}_{k-1}\cup \mathscr{V}_{k} } L_{P}(s_k)
\\ &\le(n-1) (L_{P_{k-1}}(s_k) + L_{P_{k}}(s_k))
\\ &\le (n-1)\left(L_{P_{k-1}}(q_{k}) +  L_{P_{k}}(q_{k}) - (1-\frac{1}{2n-2})(q_{k}-s_k)  \right) \\
&= (n-1)\left(2 L_{P_k}(q_{k}) - (1-\frac{1}{2n-2})(q_{k}-s_k)  \right)
\\&= (2n-2) L_{P_k}(q_{k}) - (n-\frac{3}{2})(q_{k}-s_k).
\end{align*}
Thus by \eqref{eq:MCBT}
\begin{align}  \label{eq:jaje}
    L_{2n-1}(s_k)&\ge - \sum_{j=1}^{2n-2} L_{j}(s_k) - O(1) \nonumber \\ &\ge
    -(2n-2) L_{P_k}(q_{k}) + (n-\frac{3}{2})(q_{k}-s_k) - O(1).
\end{align}
Now combining \eqref{eq:lesbos}, \eqref{eq:jaje} gives 
\begin{align*}
L_{2n-1}(q_{k})&\ge -(2n-2) L_{P_{k-1}}(q_{k}) + (n-\frac{3}{2}-\frac{1}{2n-2})(q_{k}-s_k) - O(1)\\ &= -(2n-2) L_{P_{k-1}}(q_{k}) +\frac{2n^2-5n+2}{2n-2}(q_{k}-s_k) - O(1),
\end{align*}
the claimed inequality.
\end{proof}

\section{A bound for $\widehat{w}_n(\xi)$ increasing in $\tau_k$}

The method of the proof of \cite[Theorem 1.1]{2018} 
essentially implicitly 
gives the following estimate.

\begin{theorem}    \label{A}
    Assume $n\ge 2$ is an integer and $\xi$ is a real number. Let
    $k\ge 2$ be an integer and $\tau_k$ be defined as in \eqref{eq:tk}
    and $\mu_{k}$ as in \eqref{eq:nocce}. Then
   for all good $k$ in terms of Definition~\ref{good}, 
    we have
    \[
    \tau_{k+1} \ge \mu_k - (2n-3).
    \]
    By \eqref{eq:conne} we conclude that for any $\varepsilon>0$
    and all good $k\ge k_0(\varepsilon)$ we have 
    \begin{equation} \label{eq:Turmf}
    \tau_{k+1} \ge \widehat{w}_n(\xi) - (2n-3) - \varepsilon.
    \end{equation}
\end{theorem}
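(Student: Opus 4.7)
The plan is to evaluate the parametric geometry of numbers setup at $q=q_k$, the point where the trajectories $L_{P_{k-1}}$ and $L_{P_k}$ meet, and to combine an upper bound on $L_{2n-1}(q_k)$ supplied by Lemma~\ref{lemur} with the lower bound given by Lemma~\ref{le2}.

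For the upper bound, I would use that for good $k$ the standing assumption~\eqref{eq:2n} together with Lemma~\ref{lemur} implies that $\mathscr{R}_k=\mathscr{V}_{k-1}\cup\mathscr{V}_k\cup\mathscr{V}_{k+1}$ spans the space of polynomials of degree at most $2n-2$. Extracting $2n-1$ linearly independent representatives yields $L_{2n-1}(q_k)\le \max_{P\in\mathscr{R}_k}L_P(q_k)$; assuming $\xi\in(0,1)$ without loss of generality, identity~\eqref{eq:maxi} reduces this right-hand side to $\max\{L_{P_{k-1}}(q_k),L_{P_k}(q_k),L_{P_{k+1}}(q_k)\}$. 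A short inspection of the piecewise linear formulas~\eqref{eq:121} gives the chain $s_{k-1}<q_k<s_k<s_{k+1}$, so $L_{P_{k-1}}(q_k)=L_{P_k}(q_k)$ by definition of $q_k$ and $L_{P_{k+1}}(q_k)-L_{P_k}(q_k)=\log H_{k+1}-\log H_k>0$. Consequently the maximum is attained by $P_{k+1}$ and
\[
L_{2n-1}(q_k)\le L_{P_{k+1}}(q_k)=\log H_{k+1}-\frac{q_k}{2n-2}.
\]
This step is the decisive new ingredient: it is precisely the spanning conclusion of Lemma~\ref{lemur} that forces $\log H_{k+1}$ into the estimate.

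For the matching lower bound I would apply Lemma~\ref{le2} directly. Combining the two estimates and substituting the explicit expressions $L_{P_{k-1}}(q_k)=(\log|P_{k-1}(\xi)|+(2n-2)\log H_k)/(2n-1)$, $q_k=\tfrac{2n-2}{2n-1}(\log H_k-\log|P_{k-1}(\xi)|)$, and $q_k-s_k=\tfrac{2n-2}{2n-1}(\log H_k-\log H_{k-1})$, then invoking the identities $2n^2-5n+2=(2n-1)(n-2)$ and $1-(2n-2)^2=(3-2n)(2n-1)$, I expect the algebra to collapse to
\[
\log H_{k+1}\ge (\mu_k-(2n-3))\log H_k+(n-2)(\log H_k-\log H_{k-1})-O(1).
\]
Since $n\ge 2$ and $H_k>H_{k-1}$, the second term on the right is nonnegative; dividing through by $\log H_k$ and absorbing the $O(1)/\log H_k$ into an $o(1)$ as $k\to\infty$ along good indices yields $\tau_{k+1}\ge\mu_k-(2n-3)-o(1)$, and the asserted~\eqref{eq:Turmf} is then immediate from~\eqref{eq:conne}. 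I expect the only non-trivial step to be identifying $q_k$ as the correct evaluation point, which is guided by the observation that at $q_k$ the trajectory of $P_{k+1}$ overtakes the others in $\mathscr{R}_k$; everything else is routine bookkeeping.
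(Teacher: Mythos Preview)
Your approach is correct and follows essentially the same route as the paper's: work at the meeting point $q_k$, produce an upper bound on $L_{2n-1}(q_k)$ via Lemma~\ref{lemur} (the paper phrases this as $H_{k+1}\ge H_P$ for the polynomial $P$ realizing the last minimum, which is equivalent to your $L_{2n-1}(q_k)\le L_{P_{k+1}}(q_k)$), and produce a matching lower bound from Minkowski's theorem. The one substantive difference is that for the lower bound you invoke the refined Lemma~\ref{le2}, whereas the paper uses only the cruder consequence $L_{2n-1}(q_k)\ge -(2n-2)L_{2n-2}(q_k)-O(1)$ of~\eqref{eq:MCBT}. Your choice yields the extra nonnegative term $(n-2)(\log H_k-\log H_{k-1})$, which you then discard; so you in fact prove a slightly sharper intermediate inequality at the cost of heavier algebra (the quantity $s_k$ never appears in the paper's argument for Theorem~\ref{A}, and Lemma~\ref{le2} is only genuinely needed for Theorem~\ref{B}). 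Both arguments establish the first displayed inequality only up to an $O(1)/\log H_k=o(1)$ error, which is harmless for~\eqref{eq:Turmf}.
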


\begin{proof}
We may assume \eqref{eq:2n} as otherwise the claim is obvious by $\tau_k>1$. We use the parametric geometry of numbers setup introduced
in \S \ref{pgn}.
    Very similarly to 
    the proof of \cite[Theorem 1.1]{2018}, at the position $q_{k}$
    where the trajectories $L_{P_{k-1}}$ and $L_{P_{k}}$ meet we have
    \begin{equation}  \label{eq:ducke}
    L_{2n-2}(q_k)\le L_{P_k}(q_{k}) = \frac{2n-2-\mu_k }{(2n-2)(1+\mu_k)}\cdot q_k.
    \end{equation}
    The inequality stems from Lemma \ref{ichalt}.
    For the latter identity notice that it is clear that $L_{P_k}$ decreases
    with slope $-1/(2n-2)$ up to this point $q_k$ whereas $L_{P_{k-1}}$
    increases with slope $1$ on some left neighborhood of $q_k$.
    Hence by \eqref{eq:121} we have
      \begin{equation} \label{eq:mostr}
    \log |P_{k-1}(\xi)| + q_k = L_{P_{k-1}}(q_k) = L_{P_k}(q_k)= \log H_k - \frac{q_k}{2n-2}
    \end{equation}
    and the identity can be derived with some calculation
    and the definition of $\mu_k$. We remark that by \eqref{eq:conne}, we may deduce
    \begin{equation*} 
    L_{2n-2}(q_k)\le L_{P_k}(q_{k}) \le \left(\frac{2n-2-\widehat{w}_n(\xi) }{(2n-2)(1+\widehat{w}_n(\xi))} + o(1)\right)\cdot q_k,\qquad k\to\infty,
    \end{equation*}
    which is precisely \cite[(49)]{2018}. 
    Combining \eqref{eq:ducke} 
    with the most right identity of \eqref{eq:mostr} we get
     \begin{equation} \label{eq:RE}
         \log H_k \le \left(\frac{2n-2-\mu_k }{(2n-2)(1+\mu_k)} + \frac{1}{2n-2}\right)\cdot q_k.
     \end{equation}
    On the other hand, again very similar to \cite{2018} from \eqref{eq:MCBT}, \eqref{eq:ducke} we get
    \[
    L_{2n-1}(q_k)\ge -(2n-2) L_{2n-2}(q_k) - O(1)\ge \frac{ \mu_k - 2n +2 }{1+\mu_k } \cdot q_k.
    \]
    Let
    $P$ of height $H_P$ denote the integer polynomial realizing the last minimum at $q_k$, i.e. 
    $L_P(q_k)=L_{2n-1}(q_k)$.
    Since $L_{P}$ is easily seen to decrease until $q_k$ with slope $-1/(2n-2)$ (see the proof of \cite[Theorem 1.1]{2018} for details) by \eqref{eq:Trajan} we get
    \begin{align}  
    \log H_P &=  L_{P}(q_k) + \frac{ q_k }{2n-2}  \nonumber \\
    &=L_{2n-1}(q_k) + \frac{ q_k }{2n-2} \ge \left(\frac{ \mu_k - 2n +2 }{1+\mu_k } + \frac{1}{2n-2} \right) \cdot q_k. \label{eq:AA1}
    \end{align}
    By the linear independence assumption and
    Lemma \ref{lemur}, we must have
    \begin{equation}  \label{eq:AA2}
    H_{k+1}\ge H_P.
    \end{equation}
    Assume otherwise $H_{k+1}<H_P$. By Lemma~\ref{lemur}
    we have $T^jP_{k+1}(T)$ not in the span of $\mathscr{V}_{k-1}\cup \mathscr{V}_k$ for some $0\le j\le n-2$.
    We may assume $j=0$ the other cases work analogously.
    Then since both $L_{P_{k+1}}$ and $L_P$ clearly decrease until
    $q_k$, from \eqref{eq:Trajan} we get 
    \[
     L_{P_{k+1}}(q_k) = \log H_{k+1}-\frac{q_k}{2n-2} < \log H_P - \frac{q_k}{2n-2} = L_{P}(q_k) = L_{2n-1}(q_k),
    \]
    contradicting the linear independence of $\mathscr{V}_{k-1}\cup \mathscr{V}_k\cup \{ P_{k+1}\}$. Combining the estimates \eqref{eq:RE}, \eqref{eq:AA1}, \eqref{eq:AA2}, we get
    \[
    \tau_{k+1}= \frac{ \log H_{k+1} }{\log H_k } \ge \frac{ \log H_P }{\log H_k} \ge
    \frac{\frac{ \mu_k - 2n +2 }{1+\mu_k } + \frac{1}{2n-2} }{  \frac{2n-2-\mu_k }{(2n-2)(1+\mu_k)} + \frac{1}{2n-2} } .
    \]
    Finally, the right hand side can be simplified to the desired expression $\mu_k-(2n-3)$.
\end{proof}

\begin{remark}
    If we combine \eqref{eq:Turmf} with \eqref{eq:tauk} for index $k+1$, we get the upper bound for $\widehat{w}_n(\xi)$
    from \cite[Theorem 1.1]{2018}. Recall this bound is 
    weaker than $\alpha_k$ from~\S~\ref{intro}.
\end{remark}

Note that we only used Lemma \ref{ichalt} and Lemma \ref{lemur},
the latter can even be omitted
upon introducing a more complicated argument, see Case 2 from the proof of 
\cite[Theorem 1.1]{2018}.


From $\tau_k$ given in \eqref{eq:tk},
derive
\begin{equation} \label{eq:kauen}
   \overline{\tau}:= \limsup_{k\to\infty} \tau_k.
\end{equation}
For the concern of Theorem~\ref{H},
we will only apply Theorem \ref{A} in form of the following corollary.

\begin{corollary} \label{IMPCOR}
    Assume $n\ge 2$ is an integer and $\xi$ is a real number satisfying \eqref{eq:2n}. Then
    \[
    \widehat{w}_n(\xi)\le \overline{\tau}+2n-3.
    \]
\end{corollary}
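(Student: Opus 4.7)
The plan is to view the corollary as an essentially immediate consequence of Theorem \ref{A} combined with the standing fact, recalled just before Theorem \ref{T2}, that there are infinitely many good indices $k$ for every pair $(n,\xi)$. Since $\overline{\tau}$ is defined as a $\limsup$ over all $k\ge 2$, it automatically dominates any $\limsup$ taken along a subsequence, so the strategy is to restrict to the subsequence of good $k$, apply the lower bound \eqref{eq:Turmf} on that subsequence, and then let $\varepsilon \downarrow 0$.

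More concretely, I would first observe that if $\widehat{w}_n(\xi)\le 2n-2$, then the conclusion is trivial because $\overline{\tau}\ge 1 > 0$ and $\widehat{w}_n(\xi)\le 2n-2<\overline{\tau}+2n-3$; hence we may assume \eqref{eq:2n} holds, which is the hypothesis under which Theorem \ref{A} applies. Next, fix $\varepsilon>0$ and let $\mathcal{G}\subseteq \mathbb{Z}_{\ge 2}$ denote the (infinite) set of good indices. By Theorem \ref{A}, there exists $k_0=k_0(\varepsilon)$ such that for every $k\in \mathcal{G}$ with $k\ge k_0$ we have
\[
\tau_{k+1} \ge \widehat{w}_n(\xi) - (2n-3) - \varepsilon.
\]
Taking $\limsup$ of both sides along $k\in \mathcal{G}$, $k\to\infty$, and using that
\[
\limsup_{\substack{k\to\infty \\ k\in \mathcal{G}}} \tau_{k+1} \le \limsup_{k\to\infty} \tau_{k+1} = \overline{\tau},
\]
we obtain $\overline{\tau} \ge \widehat{w}_n(\xi) - (2n-3) - \varepsilon$. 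Letting $\varepsilon\downarrow 0$ and rearranging gives $\widehat{w}_n(\xi)\le \overline{\tau}+2n-3$, the desired inequality.

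There is no real technical obstacle here: the only thing to justify carefully is that the subsequence of good $k$ is infinite, which has already been stated as a consequence of Lemma \ref{le1}, and that $\limsup$ along any infinite subsequence is bounded above by the global $\limsup$. The genuine content of the corollary was packed into Theorem \ref{A}; this corollary merely converts the per-index lower bound on $\tau_{k+1}$ into an asymptotic statement via $\overline{\tau}$, which is the form in which it will be used in the proofs of Theorems \ref{H} and \ref{T2}.
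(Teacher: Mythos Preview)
Your proof is correct and follows essentially the same route as the paper: restrict to the infinite subsequence of good $k$ (whose existence is guaranteed by Lemma~\ref{le1}), apply the lower bound \eqref{eq:Turmf} from Theorem~\ref{A}, pass to the $\limsup$, and let $\varepsilon\to 0$. The only superfluous step is your preliminary reduction to \eqref{eq:2n}, since this is already a standing hypothesis of the corollary (and Theorem~\ref{A} does not require it anyway).
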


\begin{proof}
    Since infinitely many $k$ are good (as follows from Lemma \ref{le1}, or Davenport and Schmidt's method~\cite{davsh67} extended to general $n$), considering such a subsequence,
the estimate follows directly from Theorem \ref{A}.
\end{proof}

\section{A bound for $\widehat{w}_n(\xi)$ decreasing in $\tau_k, \tau_{\ell}$}

The main new contribution to improve the upper bound for $\widehat{w}_n(\xi)$ is the following
slightly technical result whose proof uses all three new lemmas from \S~\ref{prepa} and \S~\ref{pgn}.

\begin{theorem}  \label{B}
Let $n\ge 2$ be an integer and $\xi$ be a real number satisfying \eqref{eq:2n}.
For any $\varepsilon>0$, there exists $k_0(\varepsilon)>0$ such that
    for any integer $k\ge k_0(\varepsilon)$ satisfying \eqref{eq:zweii}, if $\ell=\ell(k)$ is as in Definition~\ref{elle} and with $\tau_{.}$
    defined in \eqref{eq:tk}, we have
    \[
    \Theta_n=\Theta_n(\widehat{w}_n(\xi), \tau_k, \tau_{\ell}) \le \varepsilon
    \]
    where
    \[
    \Theta_n= d_3 \widehat{w}_n(\xi)^3 + d_2 \widehat{w}_n(\xi)^2 + d_1 \widehat{w}_n(\xi) + d_0
    \]
    with $d_j= d_j(n,\tau_k, \tau_{\ell})$ given by
    \begin{align*}
           d_3&= \tau_k, \\
    d_2&=- (2n\tau_k +n-2),\\
    d_1&=\tau_k\tau_{\ell} + (n^2+n-1)\tau_k+ (1-n)\tau_{\ell} + n^2-n-2, \\  
    d_0&= -n\cdot ( (n-1+\tau_{\ell})\tau_k + n-2 ).
    \end{align*}
 
\end{theorem}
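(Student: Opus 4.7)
The plan is to execute a parametric geometry of numbers argument at the meeting point $q_k$ of the trajectories $L_{P_{k-1}}$ and $L_{P_k}$, combining all three lemmas developed in Sections~\ref{prepa} and~\ref{pgn}. Under hypothesis~\eqref{eq:2n}, Lemma~\ref{ichalt} gives that for large $k$ the polynomials $P_j$ are irreducible of degree exactly $n$ and $\mathscr{V}_{k-1}\cup\mathscr{V}_k$ spans a hyperplane in the space of polynomials of degree at most $2n-2$. A routine calculation using $L_{P_{k-1}}(q_k) = \log|P_{k-1}(\xi)| + q_k$ and $L_{P_k}(q_k) = \log H_k - q_k/(2n-2)$ produces the explicit formulas
\[
L_{P_{k-1}}(q_k) = \tfrac{(2n-2-\mu_k)\log H_k}{2n-1}, \quad q_k = \tfrac{(2n-2)(1+\mu_k)\log H_k}{2n-1}, \quad q_k - s_k = \tfrac{(2n-2)(\tau_k-1)\log H_k}{(2n-1)\tau_k},
\]
where $s_k$ is the minimum point of $L_{P_{k-1}}$.

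Next, I would apply Lemma~\ref{le2}, exploiting the factorization $2n^2 - 5n + 2 = (2n-1)(n-2)$, to get a lower bound on $L_{2n-1}(q_k)$. For a matching upper bound I claim $L_{2n-1}(q_k) \leq L_{P_\ell}(q_k) = \log H_\ell - q_k/(2n-2)$. Indeed, $P_\ell$ has degree exactly $n$ (Lemma~\ref{ichalt}) while $T^j P_{k-1}, T^j P_k$ for $j \geq 1$ have strictly larger degree, so linear independence of $P_\ell$ from $\mathscr{V}_{k-1}\cup\mathscr{V}_k$ reduces to linear independence of $\{P_{k-1}, P_k, P_\ell\}$, as supplied by the last claim of Lemma~\ref{le1}. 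Since the transition points of $L_{P_j}$ are strictly increasing in $j$ and coincide with $q_k$ for $j = k$, the trajectory $L_{P_\ell}$ remains in its decaying phase at $q_k$, yielding the stated value. Note also that each $\mathscr{V}_j$ for $k+1 \leq j \leq \ell-1$ lies inside the hyperplane spanned by $\mathscr{V}_{k-1}\cup\mathscr{V}_k$ (as the corresponding $P_j$ lie in $\mathrm{span}(P_{k-1}, P_k)$ by definition of $\ell$), so these polynomials cannot realize the $(2n-1)$-th successive minimum.

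Combining the two bounds on $L_{2n-1}(q_k)$, dividing by $\log H_k$, and using $\mu_k \geq \widehat{w}_n(\xi) - \varepsilon$ for large $k$, a cancellation driven by $-(2n-2)^2+1+(2n-1)\mu_k = (2n-1)(\mu_k-(2n-3))$ yields
\[
\frac{\log H_\ell}{\log H_k} \geq \widehat{w}_n(\xi) - (2n-3) + \frac{(n-2)(\tau_k-1)}{\tau_k} - O(\varepsilon).
\]
Pairing this with the upper bound on $\log H_\ell/\log H_k$ from Lemma~\ref{le1} via Remark~\ref{Rr} (with $w = \widehat{w}_n(\xi)$), multiplying through by the positive factor $\tau_k(w-1)(w-n)$, and expanding in powers of $w$ produces precisely $\Theta_n(w, \tau_k, \tau_\ell) \leq O(\varepsilon)$ with the stated coefficients. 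Choosing the initial $\varepsilon$ small enough gives the claimed $\Theta_n \leq \varepsilon$.

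The main conceptual step, which I expect to require the most care, is the identification of $P_\ell$ (rather than $P_{k+1}$) as the polynomial extending $\mathrm{span}(\mathscr{V}_{k-1}\cup\mathscr{V}_k)$ to the full space: when $k$ is not good, $P_{k+1}, \ldots, P_{\ell-1}$ are trapped in a two-dimensional subspace and contribute nothing to $L_{2n-1}$. Lemma~\ref{le1} then supplies the quantitative control of $\log H_\ell$ relative to $\log H_k$ needed to turn this geometric inequality into a bound on $\widehat{w}_n(\xi)$. The remaining polynomial expansion to match the explicit form of $\Theta_n$ is straightforward but requires care in bookkeeping the coefficients.
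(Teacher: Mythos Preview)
Your overall strategy matches the paper's: compare a lower bound for $L_{2n-1}(q_k)$ coming from Lemma~\ref{le2} with an upper bound of the form $\log H_\ell - q_k/(2n-2)$, then feed in the estimate for $\log H_\ell/\log H_k$ from Lemma~\ref{le1} (via Remark~\ref{Rr}). Your explicit formulas for $q_k$, $L_{P_{k-1}}(q_k)$, and $q_k-s_k$ in terms of $\log H_k$ are correct, as is the cancellation producing $\mu_k-(2n-3)$; the paper does the equivalent computation with $q_k$ as the running parameter and the abbreviation $\Omega_k=L_{P_{k-1}}(q_k)/q_k$.

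There is, however, one genuine gap. Your claim that ``linear independence of $P_\ell$ from $\mathscr{V}_{k-1}\cup\mathscr{V}_k$ reduces to linear independence of $\{P_{k-1}, P_k, P_\ell\}$'' because ``$T^j P_{k-1}, T^j P_k$ for $j\ge 1$ have strictly larger degree'' is false: linear combinations of polynomials of degree $>n$ can drop to degree $n$ through cancellation of leading terms. For a concrete instance with $n=3$, take $P_{k-1}=T^3+T+1$ and $P_k=T^3+2T+1$ (both irreducible); then $T(P_k-P_{k-1})=T^2$ lies in the span of $\mathscr{V}_{k-1}\cup\mathscr{V}_k$, and one checks that this span contains the irreducible cubic $T^3+T^2+1$, which is linearly independent of $\{P_{k-1},P_k\}$. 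So nothing in your argument prevents $P_\ell$ itself from lying in $\mathrm{span}(\mathscr{V}_{k-1}\cup\mathscr{V}_k)$.

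The paper closes this gap via Lemma~\ref{lemur} (applicable to the triple $P_{k-1},P_k,P_\ell$ by Remark~\ref{remb}): it guarantees only that \emph{some} $T^jP_\ell\in\mathscr{V}_\ell$ lies outside the hyperplane, not necessarily $P_\ell$ itself. Since every $T^jP_\ell$ has the same height $H_\ell$ and is still in its decaying phase at $q_k$, each gives the identical trajectory value $\log H_\ell - q_k/(2n-2)$, so your upper bound $L_{2n-1}(q_k)\le \log H_\ell - q_k/(2n-2)$ survives and the rest of your argument goes through unchanged. You should replace the degree argument by an appeal to Lemma~\ref{lemur}; this is precisely the lemma you announced using in your opening sentence but never actually invoked.
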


\begin{remark}
    Again the assumption \eqref{eq:zweii} is probably not necessary.
\end{remark}

%

%

We prove the theorem.

\begin{proof}[Proof of Theorem \ref{B}]
We again consider the combined graph
with respect to approximation to
$(\xi,\xi^2,\ldots,\xi^{2n-2})$, with notation as in \S~\ref{pgn}.
Take a sequence of $k$ satisfying \eqref{eq:zweii}.
Recall $q_{k}$ is the first coordinate of the meeting point of $L_{P_{k-1}}$ and $L_{P_{k}}$.
Define $\Omega_k<0$ by
\begin{equation}  \label{eq:dass}
\Omega_k := \frac{ L_{P_{k-1}}(q_{k}) }{q_{k}} = \frac{ L_{P_{k}}(q_{k}) }{q_{k}}.
\end{equation}
As in the proof of Theorem \ref{A}, again by \cite[(49)]{2018} we have
\begin{equation} \label{eq:EQU}
\Omega_k \le \frac{2n-2-\widehat{w}_n(\xi) }{(2n-2)(1+\widehat{w}_n(\xi))} + o(1), \qquad k\to\infty.
\end{equation}
From Lemma \ref{le2} similarly to the proof of Theorem \ref{A} we get
\begin{align}
        L_{2n-1}(q_{k})&\ge -(2n-2) L_{2n-2}(q_{k}) + (q_{k}-s_k)\frac{2n^2-5n+2}{2n-2} - O(1) \nonumber \\ &\ge -(2n-2)\Omega_k\cdot q_{k} + (q_{k}-s_k)\frac{2n^2-5n+2}{2n-2}- O(1)\nonumber \\ &= 
        \left(\frac{2n^2-5n+2}{2n-2}- (2n-2)\Omega_k\right)q_{k}-\frac{2n^2-5n+2}{2n-2} s_k - O(1). \label{eq:hun}
\end{align}
We next provide an estimate for $s_k$ in terms of $q_{k}$ and $\tau_k$. 
Notice first that the definition of $q_{k}$ implies
that $L_{P_{k}}$ decays with slope $-1/(2n-2)$ on the interval $(0,q_{k})$, so \eqref{eq:Trajan} and \eqref{eq:dass} imply
\begin{align}
\log H_{k}&= L_{P_{k}}(q_{k}) + \frac{q_{k}}{2n-2} = q_{k}\cdot \left(\frac{1}{2n-2}+ \Omega_k \right).   \label{eq:cut} 
\end{align} 
Since $L_{P_{k-1}}$ decreases with slope $-1/(2n-2)$ up to $s_k$ and
increases with slope $1$ from $s_k$ to $q_{k}$ by \eqref{eq:Trajan}, we get
\begin{align}
L_{P_{k-1}}(q_{k})&= \log H_{k-1} - \frac{ s_k }{2n-2} + (q_{k} - s_k) \nonumber \\ &=
\frac{ \log H_{k} }{\tau_k} - \frac{ s_k }{2n-2} + (q_{k} - s_k) \nonumber \\  &\le
 \left( \frac{\frac{1}{2n-2}+ \Omega_k}{\tau_k}  \right)q_{k} - \frac{ s_k }{2n-2} + (q_{k} - s_k) \nonumber \\ 
&=\left(\frac{\frac{1}{2n-2}+ \Omega_k}{\tau_k} + 1 \right) q_{k} - \frac{2n-1}{2n-2}s_k. \label{eq:1a}
\end{align}
On the other hand by \eqref{eq:dass} we have
\begin{equation} \label{eq:1b}
L_{P_{k}}(q_{k})=L_{P_{k-1}}(q_{k})= \Omega_k q_{k}.
\end{equation}
Comparing \eqref{eq:1a}, \eqref{eq:1b}
and solving for $s_k$ we get
\begin{align}
    s_k \le \frac{2n-2}{2n-1}\cdot \left( \frac{\frac{1}{2n-2}+ \Omega_k}{\tau_k} + 1 - \Omega_k \right) \cdot q_{k}. \label{eq:sk}
\end{align}
Note that the bound for $s_k$ becomes $q_{k}$ if $\tau_k=1$, an intuitively result.
Inserting \eqref{eq:sk} in \eqref{eq:hun} and dividing by $q_{k}$ we get 
\begin{align*}
\frac{L_{2n-1}(q_{k})}{q_{k}} &\ge \frac{2n^2-5n+2}{2n-2}-(2n-2)\Omega_k \\
&- \frac{2n^2-5n+2}{2n-2}\cdot \frac{2n-2}{2n-1}\cdot \left( \frac{\frac{1}{2n-2}+ \Omega_k}{\tau_k} + 1 - \Omega_k \right)\\
&=\frac{2n^2-5n+2}{2n-2}-(2n-2)\Omega_k \nonumber \\ &- \frac{2n^2-5n+2}{2n-1}\cdot \left( \frac{\frac{1}{2n-2}+ \Omega_k}{\tau_k} + 1 - \Omega_k \right).
\end{align*}
Now 
by differentiating the bound with respect to $\Omega_k$ and observing $(2n-2)(2n-1)=4n^2-6n+2\ge 2n^2-5n+2$ for $n\ge 1$, we readily check that this lower bound decreases in the variable $\Omega_k$. Hence we may assume 
asymptotic equality in \eqref{eq:EQU} as $k\to\infty$. Inserting, after rearrangements 
this leads to the estimate 
\begin{equation}  \label{eq:UP}
\frac{L_{2n-1}(q_{k})}{q_{k}} \ge \frac{ (2n-2)\tau_k\cdot \widehat{w}_n(\xi) - 
(2n^2-3n+2)\tau_k - (2n^2-5n+2)
 }{ (2n-2)\tau_k(1+\widehat{w}_n(\xi))  }- \frac{\epsilon}{2},
\end{equation}
for $\epsilon>0$ and any large enough $k\ge k_0(\epsilon)$.
 
\par
We now aim to find a reverse upper bound for $L_{2n-1}(q_{k})/q_{k}$. With $\ell=\ell(k)$ as in Definition~\ref{elle},
note that $P_{k-1}, P_{k}, P_{\ell}$ are linearly independent
and so by Lemma \ref{lemur} the polynomials $\mathscr{V}_{k-1} \cup \mathscr{V}_{k}\cup \mathscr{V}_{\ell}$ span the
space of polynomials of degree
at most $2n-2$ 
(Lemma \ref{lemur} holds for any linearly independent triple of $P_j$, see Remark \ref{remb}). Thus $T^j P_{\ell}(T)$ does not lie in the span 
of $\mathscr{V}_{k-1} \cup \mathscr{V}_{k}$ for some $0\le j\le n-2$. We may assume for simplicity $j=0$ so that
$P_{\ell}\in \mathscr{V}_{\ell}$ does
not belong to the span of $\mathscr{V}_{k-1} \cup \mathscr{V}_{k}$, the other cases work very similarly. So
$\mathscr{V}_{k-1} \cup \mathscr{V}_{k}\cup \{ P_{\ell}\}$ are linearly independent. Now
\[
L_{P_{\ell}}(q_{k})> 0> \max_{ P\in \mathscr{V}_{k-1} \cup \mathscr{V}_{k}}  L_{P}(q_{k}) = L_{P_k}(q_{k})
\]
is easily verified where the last identity comes from \eqref{eq:maxi}
and $L_{P_{k-1}}(q_{k})= L_{P_{k}}(q_{k})$ by definition of $q_{k}$.
Moreover, again by \eqref{eq:Trajan}, the function $L_{P_{\ell}}$ obviously decays with 
slope $-1/(2n-2)$ 
in the interval $(0,q_{k})$ as its minimum is taken at some $q_{\ell}>q_{k}$. Combining 
all these facts we get
\begin{align} \label{eq:Ff}
L_{2n-1}(q_{k}) \le \max_{ P\in \mathscr{V}_{k-1} \cup \mathscr{V}_{k} \cup \{P_{\ell}\} }  L_{P}(q_{k}) =
L_{P_{\ell}}(q_{k}) = \log H_{\ell} - \frac{ q_{k} }{2n-2}.  
\end{align}
To bound the right hand side we next estimate $H_{\ell}$. Note that we may apply Lemma \ref{le1}
by assumption \eqref{eq:zweii}. 
By applying in this order \eqref{eq:schwimm} of Lemma \ref{le1}, \eqref{eq:cut}, \eqref{eq:EQU}, 
and \eqref{eq:horse}, we get
\begin{align}
\log H_{\ell} &\le \left( \frac{ \frac{w_n(\xi)}{\tau_k} -1 }{ \widehat{w}_n(\xi)-1 } \cdot \tau_{\ell} +o(1)\right) \cdot \log H_{k}  \nonumber \\ &\le \left( \frac{ \frac{w_n(\xi)}{\tau_k} -1 }{ \widehat{w}_n(\xi)-1 } \cdot \tau_{\ell} \left( \frac{1}{2n-2} + \Omega_k \right) +o(1)\right) \cdot q_{k}
\nonumber \\ &\le
\left(\frac{ \frac{w_n(\xi)}{\tau_k} -1 }{ \widehat{w}_n(\xi)-1 } \cdot \tau_{\ell}\cdot \frac{2n-1}{(2n-2)(1+\widehat{w}_n(\xi))  }+o(1)\right) \cdot q_{k}
\nonumber \\ &\le \left(\frac{ (n-1)\widehat{w}_n(\xi)-\tau_k(\widehat{w}_n(\xi)-n) }{\tau_k (\widehat{w}_n(\xi)-1)(\widehat{w}_n(\xi)-n) } \cdot \frac{(2n-1)\tau_{\ell} }{(2n-2)(1+\widehat{w}_n(\xi))  }+o(1)\right)\cdot q_{k}. \label{eq:NOVUS}
\end{align}
See also Remark \ref{Rr}.
Inserting in \eqref{eq:Ff} and dividing by $q_{k}$ yields
\begin{equation}  \label{eq:DO}
\frac{L_{2n-1}(q_{k})}{q_{k}} \le \Delta- \frac{1}{2n-2} + \frac{\epsilon}{2} 
\end{equation}
where
\[
\Delta= \frac{ (n-1)\widehat{w}_n(\xi)-\tau_k(\widehat{w}_n(\xi)-n) }{\tau_k (\widehat{w}_n(\xi)-1)(\widehat{w}_n(\xi)-n) } \cdot \frac{(2n-1)\tau_{\ell} }{(2n-2)(1+\widehat{w}_n(\xi))  },
\]
for $\epsilon>0$ and $k\ge k_1(\epsilon)$ again.
\par

Comparing the lower bound \eqref{eq:UP} and the upper bound \eqref{eq:DO} obtained for $L_{2n-1}(q_{k})/q_{k}$ gives
a relation involving $n,\tau_k, \tau_{\ell}$ 
and $\widehat{w}_n(\xi)$ of the form 
\begin{align*}
&\frac{ (2n-2)\tau_k\cdot \widehat{w}_n(\xi) - 
(2n^2-3n+2)\tau_k - (2n^2-5n+2)
 }{ (2n-2)\tau_k(1+\widehat{w}_n(\xi))  } \\ &\le \frac{ (n-1)\widehat{w}_n(\xi)-\tau_k(\widehat{w}_n(\xi)-n) }{\tau_k (\widehat{w}_n(\xi)-1)(\widehat{w}_n(\xi)-n) } \cdot \frac{(2n-1)\tau_{\ell} }{(2n-2)(1+\widehat{w}_n(\xi))  }  - \frac{1}{2n-2}+\epsilon,
\end{align*}
for $k\ge \max\{ k_0(\epsilon),k_1(\epsilon)\}$.
Subtracting the right side from the left,
multiplying with the common denominator and some rearrangements lead to
the estimate in Theorem \ref{B} upon modifying $\epsilon$.
\end{proof}

\section{Deduction of Theorem \ref{H} from Theorems \ref{A}, \ref{B} } \label{pthm1}

Recall that we want to prove the claim indirectly, assuming \eqref{eq:assuan} holds for some $\xi$ 
and leading it to a contradiction. Hence, as $\beta_n>2n-2$, we can 
assume the condition \eqref{eq:2n} of Theorem \ref{B} is satisfied.
Recall $\overline{\tau}$ from \eqref{eq:kauen}.
Notice this quantity depends on $\xi, n$ only. We distinguish two cases.\\

Case 1: $\overline{\tau}=1$. Then Corollary \ref{IMPCOR} directly implies
$\widehat{w}_n(\xi)\le \overline{\tau}+2n-3 =2n-2< \beta_n$. \\

Case 2: $\overline{\tau}>1$. 
We may assume the sequence $\overline{\tau}<\infty$ 
in view of \eqref{eq:tauk}, otherwise again the strengthened 
bound $\widehat{w}_n(\xi)\le 2n-2 < \beta_n$ follows immediately.
Choose a sequence of values $k$ converging to $\overline{\tau}$. 
For any large enough $k$ in this sequence,
by assumption of Case 2 we have
$\tau_k>\theta>1$ is strictly bounded away from $1$, in particular
condition \eqref{eq:zweii} of Lemma \ref{le1} and Theorem \ref{B} holds. 
Thus for such $k$ we can apply Theorem \ref{B}. Note that its bound $\Theta_n$ increases as a function in the third argument $\tau_{\ell}$ (but decreases in the argument $\tau_k$). As $\tau_k\to \overline{\tau}$ 
and clearly $\tau_{\ell}\le \overline{\tau}+o(1)$ for $\ell= \ell(k)$ as in Definition~\ref{elle} as $k\to\infty$,
we may put $\tau_k=\tau_{\ell}=\overline{\tau}$ in the inequality claim of Theorem \ref{B} upon modifying $\epsilon$.
In the limit we may ignore this error term $\epsilon$ and we obtain
\begin{equation}   \label{eq:QQ}
\Theta_n(\widehat{w}_n(\xi), \overline{\tau}, \overline{\tau})\le 0.
\end{equation}
It can be checked that \eqref{eq:QQ} induces a bound
\[
\widehat{w}_n(\xi) \le F_n(\overline{\tau}) 
\]
for some decreasing function $F_n: (1,\infty)\to \mathbb{R}$. Combined with Corollary \ref{IMPCOR}
we get
\[
\widehat{w}_n(\xi) \le \min \{  \overline{\tau} + 2n-3 , F_n(\overline{\tau}) \}. 
\]
Now the left bound increases while the right bound decreases in $\overline{\tau}$, hence the maximum (worst case) of the above minimum
is obtained for the equilibrium $\overline{\tau}$ 
where the expressions coincide. 
Hence we can put 
\[
\overline{\tau}= \widehat{w}_n(\xi)-2n+3
\]
in the defining equation \eqref{eq:QQ} of $F_n$ and solve for equality
\[
\Theta_n(\widehat{w}_n(\xi), \overline{\tau}, \overline{\tau}) =
\Theta_n(\widehat{w}_n(\xi),\widehat{w}_n(\xi)-2n+3,\widehat{w}_n(\xi)-2n+3)= 0
\]
in the variable $\widehat{w}_n(\xi)$. After simplifications, 
this gives a quartic polynomial equation $Q_n(\widehat{w}_n(\xi))=0$ 
for $Q_n(T)$ precisely the polynomial in Theorem \ref{H}.
Its largest root $\beta_n$ is checked to be in the interval $(2n-2,2n-1)$ and so Theorem \ref{H} is proved.

\section{Sketch of the proof of Theorem~\ref{T2} }

We establish a variant of Theorem~\ref{B}. Assume first
hypothesis (i) of Theorem~\ref{T2} holds
and fix any large good $k$.
Then $\ell=k+1$ in Definition~\ref{elle}, hence
in \eqref{eq:NOVUS} within the proof of Theorem~\ref{B} 
the factor $(w_n(\xi)/\tau_k-1)/(\widehat{w}_n(\xi)-1)$
derived from Lemma~\ref{le1}
can be omitted (replaced by $1$). Proceeding
as in the proof of Theorem~\ref{B} this results in an estimate
\begin{equation} \label{eq:stein}
\tilde{\Theta}_n= \tilde{\Theta}_n(\widehat{w}_n(\xi), \tau_k, \tau_{\ell})  \le \epsilon,
\end{equation}
for $\epsilon>0$ provided that $k\ge k_0(\epsilon)$ is good, where 
\[
\tilde{\Theta}_n\!:=\! (2n-2)\tau_k \widehat{w}_n(\xi) - (2n^2-3n+2)\tau_k - (2n^2-5n+2) - \tau_k((2n-1)\tau_{\ell}-1-\widehat{w}_n(\xi)).
\]
This again defines an inequality $\widehat{w}_n(\xi)\le \tilde{F}_n(\tau_k,\tau_{\ell}) + o(1)$ with some function 
$\tilde{F}_n$ decreasing
in $\tau_k$ and increasing in $\tau_{\ell}$.
Now we estimate $\tau_{\ell}$ as in
\eqref{eq:tauk} in terms of $\widehat{w}_n(\xi)$ to derive a bound
$\widehat{w}_n(\xi)\le \tilde{G}_n(\tau_k)+ o(1)$ for some decreasing
one-parameter function $\tilde{G}_n$. 
Since linear independence is assumed for $P_{k-2}, P_{k-1}, P_k$ as well,
on the other hand we may apply Theorem~\ref{A} for index $k-1$ and 
combining we get 
\[
\widehat{w}_n(\xi) \le \min\{  \tau_k + 2n-3,  \tilde{G}_n(\tau_k) \} + o(1).
\]
We let $k\to\infty$ within our subsequence 
to make the error term disappear,
and the worst case scenario happens when the bounds coincide
which in particular means $\tau_k= \widehat{w}_n(\xi)-2n+3$. 
Plugging this into $\tilde{\Theta}_n$ yields
\[
\tilde{\Theta}_n(\widehat{w}_n(\xi), \widehat{w}_n(\xi)-2n+3,
\frac{n-1}{\widehat{w}_n(\xi)-n})\le 0
\]
and the claimed bound $\gamma_n$ follows after some rearrangements.

Now as in (ii) 
assume the linear independence hypothesis holds for all large $k$.
For the same reasons as in~\S~\ref{pthm1} we may again assume the sequence $(\tau_k)_{k\ge 1}$ is bounded from above. Then
choosing a subsequence of $k$ with $\tau_k\to \overline{\tau}<\infty$, by \eqref{eq:stein} and as $\tilde{\Theta}$ increases in $\tau_{\ell}$ we obtain
\begin{equation} \label{eq:hornstein}
\tilde{\Theta}_n= \tilde{\Theta}_n(\widehat{w}_n(\xi), \overline{\tau}, \overline{\tau})  \le \epsilon_1, \qquad k\ge k_1(\epsilon_1).
\end{equation}
This results in a bound of the form
$\widehat{w}_n(\xi)\le \tilde{H}_n(\overline{\tau})+ o(1)$ for the function 
\[
\tilde{H}_n(x)= x + n-1 + \frac{n-2}{x}.
\]
This function has a local minimum
at $x=\sqrt{n-2}$ and decreases
on $x\in (1,\sqrt{n-2})$, if non-empty, and increases for $x>\sqrt{n-2}$. Moreover it can be checked to be 
smaller than $x+2n-3$ from 
Theorem~\ref{A} for all $x>1$. By these observations
and since $\overline{\tau}\ge 1$, it follows from \eqref{eq:tauk} that
\[
\widehat{w}_n(\xi)\le \max\{ \tilde{H}_n(1) , \tilde{H}_n(\frac{n-1}{\widehat{w}_n(\xi)-n})  \} = \max\{ 2n-2, \tilde{H}_n(\frac{n-1}{\widehat{w}_n(\xi)-n}) \},
\]
where the left bound applies if $\overline{\tau}\le \sqrt{n-2}$ and the right
bound applies if $\overline{\tau} > \sqrt{n-2}$.
In case that the right hand side value is the maximum, rearrangements
lead to the bound
\[
\widehat{w}_n(\xi)\le \frac{ \sqrt{5}+1}{2}n - \frac{\sqrt{5}-1}{2}.
\]
Finally it is checked that precisely for integers 
$n\ge 4$ this is smaller than $2n-2$. 

\begin{remark}
	Assume a slight generalization of Lemma~\ref{lemur}, namely that the polynomials need not be irreducible of exact degree $n$ (which was implied by assumption \eqref{eq:2n} via Lemma~\ref{ichalt}) for the implication.
	Then the proof above shows that, for $n\ge 4$ we could improve the bound $\rho_n=2n-2$ in the setting (ii) if we can settle a non-trivial lower bound for $\overline{\tau}$. A related
	estimate for simultaneous approximation is obtained in~\cite{mamo}, however the result for linear forms is not implied immediately. 
	Thus for $n\ge 10$ we would improve the best known bounds $\alpha_n$ in \eqref{eq:bsch} from~\cite{buschlei}.
\end{remark}

%
%
%

\end{document}